\definecolor{e-mail}{rgb}{0,.40,.80}
\definecolor{reference}{rgb}{.20,.60,.22}
\definecolor{citation}{rgb}{0,.40,.80}
\newtheorem{theorem}{Theorem}[section]
\newtheorem{lemma}[theorem]{Lemma}
\newtheorem{proposition}[theorem]{Proposition}
\theoremstyle{remark}
\theoremstyle{definition}
\newtheorem{remark}[theorem]{Remark}
\newtheorem{definition}[theorem]{Definition}
\newtheorem{example}[theorem]{Example}
\numberwithin{equation}{section}
\def\beq{\begin{equation}}
\def\eeq{\end{equation}}
\definecolor{todo}{rgb}{1,0,0}
\definecolor{bl}{rgb}{0,0,1}
\newcommand{\bl}[1]{\textcolor{bl}{#1}}
\def\Q{{\mathbb Q}}
\def\I{{\mathbb I}}
\def\K{{\mathbf k}}
\def\k{{\mathbf k}}
\def\o{\omega}
\def\p{\prime}
\def\cM{{\mathcal M}}
\def\cN{{\mathcal N}}
\def\cU{{\mathcal U}}
\def\cV{{\mathcal V}}
\def\cW{{\mathcal W}}
\def\ra{\mathfrak{r}}
\def\p{\partial}
\def\Hom{\operatorname{Hom}}
\def\Gl{\operatorname{GL}}
\def\GL{\operatorname{GL}}
\def\End{\operatorname{End}}
\def\Sl{\operatorname{SL}}
\def\SL{\operatorname{SL}}
\def\Mat{\operatorname{Mat}}
\def\Gal{\operatorname{Gal}}
\def\Galdelta{\operatorname{Gal}^\delta}
\def\Ru{\operatorname{R_u}}
\def\Ker{\operatorname{Ker}}
\def\Diff{\operatorname{Diff}}
\def\Lie{\operatorname{Lie}}
\def\trdeg{\operatorname{tr.deg}}
\def\Quot{\operatorname{Quot}}
\def\diag{\operatorname{diag}}
\def\Ga{\bold{G}_a}
\def\Vect{\operatorname{Vect}}
\renewcommand{\ker}{\mathrm{Ker}}
\begin{document}
\title[Third order linear differential equations with parameters]{Calculating   Galois groups of third order linear differential equations with parameters} 
\author[Andrei Minchenko]{Andrei Minchenko}
\author[Alexey Ovchinnikov]{Alexey Ovchinnikov}
\address{University of Vienna, Department of Mathematics, Oskar-Morgenstern-Platz 1, Vienna, Austria}
\email{an.minchenko@gmail.com}
\address{Department of Mathematics\\
CUNY, Queens College \\
65-30 Kissena Blvd \\
Queens, NY 11367, USA }
\email{aovchinnikov@qc.cuny.edu}

\subjclass[2010]{12H05, 12H20, 13N10, 20G05, 20H20}
\keywords{Differential equations with parameters, differential Galois groups, algorithms}
\thanks{This work was partially supported  
by the Austrian Science Foundation FWF, grant P28079, 
by the NSF grants CCF-0952591 and DMS-1606334, the NSA grant \#H98230-15-1-0245,  CUNY CIRG \#2248, and  PSC-CUNY grant \#69827-00 47.}

\begin{abstract} Motivated by developing algorithms that decide hypertranscendence of solutions of extensions of the Bessel differential equation, algorithms computing the unipotent radical of a parameterized differential Galois group have been recently developed. Extensions of Bessel's equation, such as the Lommel equation, can be viewed as homogeneous parameterized linear differential equations of the third order. In the present paper, we give the first known algorithm that calculates the differential Galois group of a third order parameterized linear differential equation.
\end{abstract}
\maketitle

\section{Introduction}
The existing methods of studying hypertranscedence of solutions of second order inhomogeneous linear differential equations \cite{HMO} use results from  parameterized differential Galois theory \cite{Dreyfus:density,MitSingMonod}. In this application, these results essentially limit the consideration of inhomogeneous terms to rational functions. However, the Lommel function with a non-integer parameter $\mu$, the Anger and Weber functions are all examples of solutions of inhomogeneous versions of Bessel's differential equation with inhomogeneous terms that are not rational functions. Therefore, new techniques in parameterized differential Galois theory are needed so that the theory can be readily applied to the above functions. In addition, third order linear differential equations with parameters appear in  electromagnetic waves modeling \cite[\S 78]{LL}.

The hypertranscendence results of~\cite{HMO} are based in part on new results in the representation theory of linear differential algebraic groups. In particular, extensions of a trivial representation by a two-dimensional representation are analyzed there. In the present paper, we give a deeper development of the representation theory by analyzing three-dimensional representations of linear differential algebraic groups that are not necessarily extensions of a trivial representation. Based on this, we present the first algorithm that calculates the differential Galois group of a third-order homogeneous parameterized  linear differential equation whose coefficients are rational functions.

Our algorithm relies on several existing algorithms in the parameterized differential Galois theory referenced below.
For linear differential equations of order two,   an algorithmic development was initiated in~\cite{Dreyfus}  and  completed in~\cite{ArrecheJSC}. An algorithm that allows to test 
if the parameterized differential Galois group is reductive and to compute the group in that case can be found in~\cite{MinOvSing}. In \cite{MinOvSingunip}, it is shown how to compute
the parameterized differential Galois group if its quotient by the unipotent radical is conjugate to a group of matrices with constant entries with respect to the parametric derivations.  The algorithms of \cite{MinOvSingunip,MinOvSing} rely on the algorithm of computing differential Galois groups~\cite{Hrushcomp}, which has been further analyzed and improved in~\cite{RFdifferential2015}, in the case of no parameters.

The paper is organized as follows. Section~\ref{sec:BasicDef} contains basic definitions and facts in differential algebraic geometry, differential algebraic groups, and systems of linear differential equations with parameters phrased in the language most suitable for our description of our algorithm. The main algorithm is described in Section~\ref{sec:Algorithm}.
\section{Basic definitions and facts}\label{sec:BasicDef}
For the convenience of the reader, we present the basic standard definitions and facts from differential algebraic geometry, differential algebraic groups, and differential modules used in the rest of the paper.
\subsection{Differential algebraic geometry}
\begin{definition} A {\em differential ring} is a ring $R$ with a finite set $\Delta=\{\delta_1,\ldots,\delta_m\}$ of commuting derivations on $R$. A {\em $\Delta$-ideal} of $R$
is an ideal of $R$ stable under any derivation in $\Delta$. \end{definition}

 In the present paper, $\Delta$ will consist of one or two elements. Let $R$ be a $\Delta$-ring. For any $\delta \in \Delta$, we denote
 $$
 R^\delta= \{r \in R\:|\: \delta(r) = 0\},
 $$
 which is a $\Delta$-subring  of $R$ and is called the {\em ring of $\delta$-constants} of $R$. If $R$ is a field and a differential ring, then it is called a differential field, or $\Delta$-field for short. For example, $(R = \Q(x,t), \Delta=\{\partial/\partial x,\partial/\partial t\})$ is a differential field. 
 
The ring of $\Delta$-{\em differential polynomials} $K\{y_1,\ldots,y_n\}$ in the differential indeterminates, or $\Delta$-indeterminates,  $y_1,\ldots,y_n$ and with coefficients in a $\Delta$-field $(K,\Delta)$,  is the ring of polynomials in the indeterminates formally denoted $$\left\{\delta_1^{i_1}\cdot\ldots\cdot\delta_m^{i_m} y_i\:\big|\: i_1,\ldots,i_m\ge 0,\, 1\le i\le n\right\}$$ with coefficients in $K$. We endow this ring with a structure of a $\Delta$-ring by setting 
$$\delta_k \left(\delta_1^{i_1}\cdot\ldots\cdot \delta_m^{i_m} y_i \right)= \delta_1^{i_1} \cdot\ldots\cdot \delta_k^{i_k+1} \cdot\ldots\cdot \delta_m^{i_m} y_i.$$ 

\begin{definition}[{see \cite[Corollary~1.2(ii)]{Marker2000}}]
A differential field $(K,\Delta)$ is said to be differentially closed or $\Delta$-closed for short, if, for every finite set of $\Delta$-polynomials $F \subset K\{y_1,\ldots,y_n \}$, if the system of differential equations $F=0$ has a solution  with entries in some $\Delta$-field extension $L$, then it has a solution with entries in $K$.\end{definition}

Let  $(\K,\delta)$ be a differentially closed field, $C=\K^\delta$, and  $(F,\delta)$  a $\delta$-subfield of $\k$.

\begin{definition}\label{def:Kc} A {\it Kolchin-closed} (or $\delta$-closed, for short) set $W \subset \K^n$   is the set of common zeroes
of a system of $\delta$-polynomials with coefficients in $\K$, that is, there exists  $S \subset \K\{y_1,\dots,y_n\}$ such that
$$
W = \left\{ a \in \K^n\:|\: f(a) = 0 \mbox{ for all } f \in S \right\}.$$
We say that $W$ is defined over $F$ if $W$ is the set of zeroes of  $\delta$-polynomials with coefficients in  $F$. More generally, for any $\delta$-ring $R$ extending $F$, $$
W(R) = \left\{ a \in R^n\:|\:   f(a) = 0 \mbox{ for all } f \in S \right\}.$$
\end{definition}

\begin{definition}
If $W \subset \K^n$ is a  Kolchin-closed set defined  over $F$,  the $\delta$-ideal  $$\I(W) = \{ f\in F\{y_1,  \ldots , y_n\} \ | \ f(w) = 0 \mbox{ for all } \ w\in W(\K)\}$$
is called the defining $\delta$-ideal of $W$ over $F$.
Conversely, for a  subset  $S$ of $F\{y_1,\dots,y_n\}$, the following subset  is $\delta$-closed in  $\K^n$ and defined over $F$:
$$
\bold{V}(S)=  \left\{ a \in \K^n\:|\: f(a)= 0 \mbox{ for all } f \in S \right\}.$$ 
\end{definition}
\begin{remark}
Since every  radical $\delta$-ideal of  $F\{y_1,  \ldots , y_n\}$ is generated as a radical $\delta$-ideal by a finite set of $\delta$-polynomials  (see, for example, \cite[Theorem, page~10]{RittDiffalg}, \cite[Sections~VII.27-28]{Kapldiffalg}),  the Kolchin topology is {\em Ritt--Noetherian}, that is, every strictly decreasing chain of Kolchin-closed sets has a finite length.
\end{remark}

\begin{definition}
Let $W \subset \K^n$ be a $\delta$-closed set defined over $F$. The $\delta$-coordinate ring $F\{W\}$ of 
$W$ over $F$ is the $F$-$\Delta$-algebra
$$
F\{W\} = F\{y_1,\ldots,y_n\}\big/\I(W).
$$
If $F\{W\}$ is an integral domain, then $W$ is said to be {\it irreducible}. This  is equivalent to $\I(W)$ being a prime $\delta$-ideal.
\end{definition}

\begin{definition}
Let $W \subset \K^n$ be a $\delta$-closed set defined over $F$. 
Let  $\I(W) = \mathfrak{p}_1\cap\ldots\cap \mathfrak{p}_q$ be  a minimal $\delta$-prime decomposition of\ \ $\I(W)$, that is, the $\mathfrak{p}_i \subset F\{y_1,\dots,y_n\}$ are prime $\delta$-ideals containing $ \I(W)$ and minimal with this property. This decomposition is unique  up to permutation (see \cite[Section~VII.29]{Kapldiffalg}).  The irreducible Kolchin-closed sets 
$W_i=\bold{V}(\mathfrak{p}_i)$ are defined over $F$ and  called the {\it irreducible components} of $W$. We  have $W = W_1\cup\ldots\cup W_q$. 
\end{definition}

\begin{definition}
Let $W_1 \subset \K^{n_1}$ and $W_2 \subset \K^{n_2}$ be two Kolchin-closed sets defined over $F$. 
A $\delta$-polynomial map (morphism) defined over $F$ is a map  $$\varphi : W_1\to W_2,\quad a \mapsto \left(f_1(a),\dots,f_{n_2}(a)\right),\ \  a \in W_1,$$ where $f_i \in F\{y_1,\dots,y_{n_1}\}$ for all $i=1,\dots,n_2$.  

If $W_1 \subset W_2$, the inclusion map of $W_1$ in $W_2$ is a $\delta$-polynomial map. In this case, we say that $W_1$ is 
a $\delta$-closed subset of $W_2$.
\end{definition}

\begin{example}
Let $\GL_n \subset \K^{n^2}$ be the group of $n \times n$ invertible matrices   with entries in $\K$. One can see
$\GL_n$ as a Kolchin-closed subset of $\K^{n^2} \times \K$ defined over $F$, defined by the equation $\det(X)y-1$ in $F\big\{\K^{n^2} \times \K\big\}=F\{X,y\}$, where $X$ is an $n \times n$-matrix of $\delta$-indeterminates over $F$ and $y$ a $\delta$-indeterminate over $F$. One can thus identify the $\delta$-coordinate ring of $\GL_n$ over $F$ with  $F\{Y,1/\det(Y)\}$, where $Y=(y_{i,j})_{1 \leq i,j \leq n} $ is
a matrix of $\delta$-indeterminates over $F$. We also denote  the special linear group that consists of the matrices  of determinant $1$ by $\SL_n \subset \GL_n$.

Similarly, if $V$ is a finite-dimensional $F$-vector space,  $\GL(V)$ is defined as the group of invertible   $\K$-linear maps of $V\otimes_F \K$. To simplify the terminology, we will also treat 
$\GL(V)$ as Kolchin-closed sets tacitly assuming that some basis of $V$ over $F$ is fixed. 
\end{example}

\begin{remark}
If  $K$ is a field, we denote  the group of invertible matrices with coefficients in $K$ by  $\GL_n(K)$. 
\end{remark}
\subsection{Differentail algebraic groups}
\begin{definition}A linear differential algebraic group $G \subset \K^{n^2}$ defined  over $F$
is a subgroup of   $\GL_n$ that is a Kolchin-closed set defined over $F$. If $G \subset H \subset \GL_n$ are Kolchin-closed subgroups of 
$\GL_n$, we say that $G$ is a $\delta$-closed subgroup, or $\delta$-subgroup of $H$.
\end{definition}

We will use the abbreviation LDAG for a linear differential algebraic group.

\begin{proposition}\label{propo:defzariksidenserelationalggroupanddiffalggroup}
Let $G \subset \GL_n$ be a linear algebraic group defined over $F$.
\begin{enumerate}[leftmargin=0.5cm,itemindent=0cm,labelsep=0.55cm,itemsep=0.1cm,align=parleft]
\item $G$ is an LDAG. 
\item Let $H \subset G$ be a $\delta$-subgroup of $G$ defined over $F$, and the Zariski closure $\overline{H} \subset G$ be the  closure of $H$
with respect to the Zariski topology. In this case, $\overline{H}$ is a linear algebraic group defined over $F$, whose polynomial defining ideal over $F$ is 
 $$\I(H) \cap F[Y] \subset \I(H) \subset  F\{Y\},$$ 
 where  $Y=(y_{i,j})_{1 \leq i,j \leq n} $ is
a matrix of $\delta$-indeterminates over $F$.
\end{enumerate}
\end{proposition}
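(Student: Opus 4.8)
The plan is to treat the two assertions separately, since (1) is essentially a tautology once the right dictionary is in place, while (2) carries the actual content.

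For part (1), I would argue that a linear algebraic group $G\subset\GL_n$ defined over $F$ is, by definition, the zero set of some collection of polynomials with coefficients in $F$ in the entries $y_{i,j}$ and in $1/\det(Y)$. Since ordinary polynomials are in particular $\delta$-polynomials, $G$ is cut out by a subset $S\subset F\{Y,1/\det(Y)\}$, hence $G$ is a Kolchin-closed set defined over $F$. As $G$ is also a subgroup of $\GL_n$, it is by definition an LDAG over $F$. The only thing to note here is compatibility of the two ambient descriptions of $\GL_n$ (as a Zariski-closed subset of $\K^{n^2}\times\K$ and as a Kolchin-closed subset of the same space), which is immediate from the Example preceding the Proposition.

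For part (2), let $H\subset G$ be a $\delta$-subgroup defined over $F$, with defining $\delta$-ideal $\I(H)\subset F\{Y\}$ (working in the $\delta$-coordinate ring of $\GL_n$, i.e.\ inverting $\det(Y)$ where needed). First I would observe that $\overline H$, being the Zariski closure of a subgroup of the algebraic group $\overline G$, is again a subgroup; this is the standard fact that the closure of a subgroup in a topological group which is also an algebraic group is a (closed algebraic) subgroup, applied inside $\GL_n$. So $\overline H$ is a linear algebraic group. Next, the defining polynomial ideal of $\overline H$ over $F$ is, by the definition of Zariski closure, exactly the set of polynomials in $F[Y,1/\det(Y)]$ vanishing on $H(\K)$; but a polynomial vanishes on $H(\K)$ if and only if, viewed as a $\delta$-polynomial, it lies in $\I(H)$. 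Hence the defining ideal of $\overline H$ over $F$ is $\I(H)\cap F[Y,1/\det(Y)]$, i.e.\ $\I(H)\cap F[Y]$ in the notation of the statement. The one subtlety worth spelling out is that the Zariski closure \emph{computed over $F$} coincides with the closure computed over $\K$ and then shown to be defined over $F$: for this I would invoke that $\I(H)$ is a radical (indeed prime, if $H$ is connected, but radical suffices) $\delta$-ideal generated over $F$, together with the fact that extension of scalars from $F$ to $\K$ is faithfully flat, so that $\big(\I(H)\cap F[Y]\big)\otimes_F\K$ generates $\I(\overline H)$ over $\K$ and no new relations appear. Concretely, one checks that a $\K$-polynomial vanishing on $H(\K)$ has all its $F$-coordinates (in some $F$-basis of $\K$) again vanishing on $H(\K)$, because $H(\K)$ is defined over $F$.

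The main obstacle is this last descent point: verifying that taking the Zariski closure does not enlarge the field of definition, equivalently that $\I(H)\cap F[Y]$ already generates the full vanishing ideal of $\overline H\subset\GL_n$ after base change to $\K$. Everything else — that polynomials are $\delta$-polynomials, that closures of subgroups are subgroups, that the Kolchin topology is Ritt--Noetherian so the relevant ideals are finitely generated — is formal given the definitions and the Remark already recorded in the excerpt.
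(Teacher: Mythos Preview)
Your argument is correct, and there is in fact nothing to compare it against: the paper states this proposition without proof, treating both assertions as standard background facts. Your unpacking of the definitions---polynomials are $\delta$-polynomials for part (1), and for part (2) the observation that a polynomial vanishes on $H(\K)$ if and only if it lies in $\I(H)$, so the Zariski defining ideal is exactly $\I(H)\cap F[Y]$---is the expected one. The descent concern you raise at the end (that the closure over $\K$ is already defined over $F$) is more care than the paper itself exercises, but your handling of it via an $F$-basis argument is fine; in practice this point is routinely absorbed into the convention that ``defined over $F$'' means cut out by equations with coefficients in $F$, together with the Ritt--Noetherian property already noted in the paper's preliminaries.
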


\begin{definition}
Let $G$ be an LDAG  defined over $F$. The irreducible component of $G$ containing the identity element $e$  is called the {\it identity component} of $G$ and denoted by $G^\circ$. The LDAG $G^\circ$ is a $\delta$-subgroup of $G$ defined over $F$. 
\end{definition}
\begin{definition}
An LDAG 
$G$ is said to be {\it connected} if $G = G^\circ$, which is equivalent to $G$ being an irreducible Kolchin-closed set \cite[page~906]{cassdiffgr}.
\end{definition}

\begin{definition}Let $G$ be an LDAG defined over $F$ and  $V$   a
finite-dimensional vector space over $F$. A  $\delta$-polynomial
group homomorphism  $\rho : G \to \GL(V)$ defined over $F$ is called a
{\it representation} of $G$ over $F$. 

We shall also say that $V$ is a \emph{$G$-module} over $F$. By a faithful (respectively, simple,  semisimple) $G$-module, we mean
a faithful (respectively, irreducible, completely reducible) representation $\rho:G \rightarrow \GL(V)$.
\end{definition}

The image of a $\delta$-polynomial group homomorphism $\varrho : G\to H$ is Kolchin closed \cite[Proposition~7]{cassdiffgr}. Moreover, if $\ker(\varrho)=\{1\}$, then $\rho$ is an isomorphism of linear differential algebraic groups between $G$ and $\rho(G)$ \cite[Proposition~8]{cassdiffgr}.

\begin{definition}[{\cite[Theorem~2]{Cassunip}}] An LDAG $G$ is {\it unipotent} if one of 
the following equivalent conditions holds:
\begin{enumerate}[leftmargin=0.5cm,itemindent=0cm,labelsep=0.55cm,itemsep=0.1cm,align=parleft]
\item $G$ is conjugate to a differential algebraic subgroup  of the group  of unipotent upper triangular matrices;
\item $G$ contains no elements of finite order $>1$;
\item $G$ has a descending normal sequence  of differential algebraic subgroups 
$$G=G_0 \supset G_1 \supset \ldots \supset G_N =\{1\}$$
with $G_i/G_{i+1}$ isomorphic to a differential algebraic subgroup of the additive group $\bold{G}_a$.
\end{enumerate}
\end{definition}

One can  show that 
an LDAG   $G$ defined over $F$ admits a  largest normal unipotent differential algebraic subgroup defined over $F$ \cite[Theorem~3.10]{diffreductive}.

\begin{definition} Let $G$ be an LDAG defined over $F$. The  largest normal unipotent differential algebraic subgroup of $G$ defined over $F$ is  called the {\it unipotent radical} of $G$ 
and denoted by $\Ru(G)$. The unipotent radical of a linear algebraic group $H$ is also denoted by $\Ru(H)$.
\end{definition}

\begin{definition}A non-commutative LDAG $G$ is said to be {\em simple} if $\{1\}$ and $G$ are the only normal differential algebraic subgroups of $G$.
\end{definition}

\begin{definition}
A \emph{quasi-simple} linear (differential) algebraic group is a finite central extension of a simple non-commutative linear (differential) algebraic group.
 \end{definition}
 
\begin{definition}
An LDAG $G$  is said to be {\it reductive} if  $\Ru(G) = \{1\}$.
\end{definition}

\begin{proposition}[{\cite[Remark~2.9]{MinOvSing}}]
Let $G \subset \GL_n$ be an LDAG. If $\overline{G} \subset \GL_n$
is a reductive linear algebraic group, then $G$ is a reductive LDAG.
\end{proposition}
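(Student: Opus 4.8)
The plan is to prove $\Ru(G)=\{1\}$ directly, by transporting the unipotent radical of $G$ along the Zariski closure to a normal unipotent subgroup of $\overline G$ and then invoking the hypothesis, which gives $\Ru(\overline G)=\{1\}$.

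First I would note that $\Ru(G)$, being a unipotent LDAG, consists entirely of unipotent matrices: by condition~(1) in the definition of a unipotent LDAG, $\Ru(G)$ is conjugate inside $\GL_n$ to a $\delta$-subgroup of the group of unipotent upper triangular matrices, and unipotency of a matrix is preserved under conjugation. Now let $U=\overline{\Ru(G)}$ be the Zariski closure of $\Ru(G)$ in $\GL_n$. By Proposition~\ref{propo:defzariksidenserelationalggroupanddiffalggroup}(2), $U$ is a linear algebraic group (defined over $F$). Since the set $\{x\in\GL_n : (x-I)^n=0\}$ of unipotent matrices is Zariski closed and contains $\Ru(G)$, it also contains $U$; hence every element of $U$ is unipotent, and by the classical theorem of Kolchin (in characteristic zero) $U$ is a unipotent linear algebraic group, in particular connected.

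Next I would check that $U$ is normal in $\overline G$. For every $g\in G$ we have $g\,\Ru(G)\,g^{-1}=\Ru(G)$ because $\Ru(G)\trianglelefteq G$; applying the Zariski closure to this equality yields $g\,U\,g^{-1}=U$. Hence the normalizer $N_{\overline G}(U)$, which is a Zariski-closed subgroup of $\overline G$, contains $G$ and therefore contains its Zariski closure $\overline G$. Thus $U$ is a connected normal unipotent algebraic subgroup of $\overline G$, so $U\subseteq \Ru(\overline G)=\{1\}$. Consequently $\Ru(G)\subseteq U=\{1\}$, i.e. $\Ru(G)=\{1\}$, and $G$ is reductive.

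The only genuinely nontrivial ingredient is the step in the second paragraph passing from ``$\Ru(G)$ consists of unipotent matrices'' to ``$U$ is a unipotent algebraic group''; this relies on the elementary fact that the unipotent locus of $\GL_n$ is Zariski closed together with Kolchin's theorem that a linear algebraic group all of whose elements are unipotent is conjugate into the upper unitriangular group (hence unipotent and, in characteristic zero, connected). Everything else — that normality transfers to Zariski closures, and that a reductive algebraic group has trivial unipotent radical by definition — is routine.
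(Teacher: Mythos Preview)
The paper does not supply its own proof of this proposition; it simply records the statement with a citation to \cite[Remark~2.9]{MinOvSing}. Your argument is correct and is precisely the standard one: the Zariski closure $U=\overline{\Ru(G)}$ is a connected unipotent algebraic subgroup of $\overline{G}$ (the unipotent locus $\{x:(x-I)^n=0\}$ is Zariski closed, and Kolchin's theorem then gives unipotency and connectedness of $U$), and $U$ is normal in $\overline{G}$ because the normalizer $N_{\overline{G}}(U)$ is Zariski closed and contains the Zariski-dense subgroup $G$; hence $U\subset\Ru(\overline{G})=\{1\}$, so $\Ru(G)=\{1\}$.
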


For a group $G$, its subgroup generated by $\{ghg^{-1}h^{-1}\:|\: g,h \in G\}$ is denoted by $[G,G]$. 

\begin{definition}
Let $G$ be a group and $G_1,\dots,G_n$  subgroups of $G$. We say that 
$G$ is the almost direct product of $G_1,\dots,G_n$ if 
\begin{enumerate}[leftmargin=0.5cm,itemindent=0cm,labelsep=0.55cm,itemsep=0.1cm,align=parleft]
\item the commutator subgroups $[G_i,G_j]=\{1\}$ for all $i \neq j$;
\item the morphism $$\psi :G_1\times \hdots \times G_n \rightarrow G,\quad (g_1,\dots,g_n) \mapsto g_1\cdot\hdots \cdot g_n$$
is an isogeny, that is, a surjective map with a finite kernel.  
\end{enumerate}
\end{definition}

\begin{theorem}[{\cite[Theorem~2.25]{HMO}}]\label{thm:decompalmostdirectprodreductive}
Let $G \subset \GL_n$ be a linear differential algebraic group defined over $F$. Assume that  
 $\overline{G} \subset \GL_n $ is  a connected reductive algebraic group.
Then
\begin{enumerate}[leftmargin=0.5cm,itemindent=0cm,labelsep=0.55cm,itemsep=0.1cm,align=parleft]
\item\label{part1} $\overline{G}$ is an almost direct product of a torus $H_0$ and non-commutative 
normal quasi-simple linear algebraic groups $H_1,\dots,H_s\,$ defined over $\mathbb{Q}$;
\item\label{part2} $G$ is an almost direct product of a Zariski dense $\delta$-closed subgroup $G_0$ of $H_0$ and some $\delta$-closed subgroups $G_i$ 
of $H_i$ for $i=1,\dots,s$;
 \item\label{part3}  moreover, either $G_i=H_i$ or $G_i$ is conjugate by a matrix of $H_i$ to $H_i(C)$;
\end{enumerate}
\end{theorem}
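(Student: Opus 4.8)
The plan is to read off part~(1) from the classical structure theory of connected reductive groups and then to transport that structure to $G$ by pulling it back along the multiplication isogeny of $\overline G$. For (1): since $\overline G$ is reductive, its radical is a central torus, which we call $H_0$; the derived subgroup $[\overline G,\overline G]$ is connected semisimple, is the almost direct product of its minimal connected normal subgroups $H_1,\dots,H_s$ (each non-commutative and normal in $\overline G$), and the product map $H_0\times[\overline G,\overline G]\to\overline G$ is an isogeny. Each semisimple $H_i$ is determined up to isomorphism by its root datum, hence is a Chevalley group and carries a split $\Q$-form; after conjugating $G$ inside $\GL_n$ one may therefore assume that $H_0,\dots,H_s$ are all defined over $\Q$ and that $\mu\colon H_0\times H_1\times\dots\times H_s\to\overline G$ is an isogeny defined over $\Q$. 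The one genuinely non-formal point here is this last conjugation, which is standard.

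For (2) and (3), set $\widetilde G:=\mu^{-1}(G)$. Because $\mu$ is a morphism of algebraic groups --- in particular a $\delta$-polynomial map --- and $G$ is Kolchin closed, $\widetilde G$ is a $\delta$-closed subgroup of $H_0\times\dots\times H_s$; and because $\mu$ is finite and surjective while $G$ is Zariski dense in $\overline G$, the Zariski closure of $\widetilde G$ is a closed subgroup of the product that maps onto $\overline G$ with finite kernel, hence of full dimension, hence all of $H_0\times\dots\times H_s$ by connectedness. So $\widetilde G$ is Zariski dense in $\prod_{i=0}^s H_i$, and each image $\Gamma_i:=\mathrm{pr}_i(\widetilde G)\subseteq H_i$ under the $i$-th projection is a $\delta$-closed subgroup (images of $\delta$-polynomial homomorphisms are Kolchin closed, by \cite[Proposition~7]{cassdiffgr}) that is Zariski dense in $H_i$. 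For $i\ge1$, $\Gamma_i$ is a Zariski-dense $\delta$-subgroup of the quasi-simple group $H_i$, so by Cassidy's classification of Zariski-dense differential algebraic subgroups of simple algebraic groups for a single derivation, $\Gamma_i$ is $H_i$ or conjugate, by an element of $H_i$, to $H_i(C)$; this already gives (3) once the $G_i$ are identified with the $\Gamma_i$.

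It remains to prove $\widetilde G=\Gamma_0\times\Gamma_1\times\dots\times\Gamma_s$, and this is the heart of the matter. Each $\Gamma_i$ with $i\ge1$, being $H_i$ or a conjugate of $H_i(C)$, is perfect and has only finite central normal $\delta$-subgroups or the whole group, hence admits no nontrivial finite and no nontrivial abelian quotient, whereas $\Gamma_0$ is commutative. A Goursat-type induction on $s$ then forces $\widetilde G$ to be the full product: a proper subdirect inclusion would, via Goursat, produce a nontrivial common quotient of two of the $\Gamma_i$; one between $\Gamma_0$ and some $\Gamma_i$ with $i\ge1$ is impossible since one factor is abelian and the other perfect, and a nontrivial common quotient of $\Gamma_i$ and $\Gamma_j$ with $i,j\ge1$ would be an infinite quasi-simple $\delta$-group, which would force the Zariski closure of $\widetilde G$ to be a proper subgroup of $\prod_i H_i$, contradicting the density established above. \textbf{The main obstacle} is exactly this step --- ruling out ``twisted'' (diagonal) subdirect products --- and the Zariski density of $G$ in $\overline G$ is precisely what makes it go through.

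With the decomposition in hand, put $G_i:=\Gamma_i\subseteq H_i$; since $\mu$ restricted to each factor $H_i$ is the inclusion into $\overline G$, we get $G=\mu(\widetilde G)=G_0\cdot G_1\cdots G_s$ inside $\overline G$. These $\delta$-closed subgroups pairwise commute because the $H_i$ do, and the kernel of $G_0\times\dots\times G_s\to G$ is finite since distinct $H_i$ meet only in finite central subgroups, so $G$ is the almost direct product of $G_0,\dots,G_s$. Finally, $\overline{G_0}=\overline{\Gamma_0}=H_0$, which is the Zariski-density assertion of (2), and the classification of $G_i=\Gamma_i$ for $i\ge1$ recorded above is (3).
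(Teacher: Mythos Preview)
The paper does not prove this theorem; it is quoted from \cite[Theorem~2.25]{HMO} as background, with no argument given here. So there is no in-paper proof to compare against.

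Your strategy is the standard one and is essentially what one finds in the literature: classical structure theory of reductive groups for~(1), pullback along the multiplication isogeny and Cassidy's classification \cite{Cassimpl} for~(2) and~(3). One step is not fully justified, however. In the Goursat argument you assert that a nontrivial common quotient of $\Gamma_i$ and $\Gamma_j$ with $i,j\ge 1$ ``would force the Zariski closure of $\widetilde G$ to be a proper subgroup of $\prod_i H_i$.'' This presupposes that the $\delta$-isomorphism between the quasi-simple quotients is a morphism of algebraic groups, so that its graph is Zariski closed; a priori a $\delta$-polynomial isomorphism could have Zariski-dense graph (as happens already for $\Ga$ via $x\mapsto x'$). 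Your type-mismatch remark correctly rules out the mixed case $\Gamma_i=H_i$, $\Gamma_j\sim H_j(C)$, but in the equal-type cases you still need an extra ingredient: either a direct argument that every $\delta$-homomorphism between simple algebraic groups is algebraic, or Cassidy's classification of Zariski-dense $\delta$-Lie subalgebras of a semisimple Lie algebra, which gives the product splitting of $\Lie\widetilde G$ outright and bypasses Goursat. With either of these in hand your outline goes through; without it, the ``main obstacle'' you flag is not yet removed.
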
 

\begin{definition}
Let $G$ be an LDAG defined over $F$. We define $\tau(G)$, the {\em differential type} of $G$, to be $0$ if $\trdeg_F\Quot(F\{G^\circ\}) <\infty$ and to be $1$ otherwise.
\end{definition}

 \begin{definition}\label{def:DFGG} Let $G$ be an LDAG defined over $F$.  We say that $G$ is \emph{differentially finitely generated}, or simply a \emph{DFGG}, if $G(\K)$ contains a  finitely generated subgroup that is Kolchin dense over $F$. 
\end{definition}

\subsection{Differential modules and their Galois groups}Our presentation of the (parameterized) differential Galois theory is deliberately based on fiber functors and tensor categories so that the description of our main algorithm is clearer. Indeed,  it is essential for the description to have a correspondence between the operations performed with the differential module (system of linear differential equations) and with representations of its (parameterized) differential Galois group.

Let $K$ be  a $\Delta=\{\p,\delta\}$-field and $\k=K^\p$.
We assume for simplicity that $(\k,\delta)$ is  a differentially closed field (this assumption was relaxed in \cite{GGO,wibdesc,LSN}).
\begin{definition}
  A $\p$-module
$ \cM $ over $K$ is a left $K[\p]$-module that is a finite-dimensional vector space over $K$.
\end{definition}

Let $\cM$ be a $\p$-module over $K$
and $\{e_1,\dots,e_n\}$ a $K$-basis of $\cM$. Let $A=(a_{i,j}) \in \Mat_n(K)$ be the matrix defined
by 
\begin{equation}\label{eq:dm}
\p (e_i)= - \sum_{j=1}^n a_{j,i} e_j,\quad i=1,\dots,n.
\end{equation} Then, for any $m=\sum_{i=1}^n y_i e_i$, where 
$Y=(y_1,
 \ldots ,
 y_n)^T \in K^n$, we have $$\p(m) = \sum_{i=1}^n \p(y_i)e_i - \sum_{i=1}^n \left( \sum_{j=1}^n a_{i,j} y_j\right)e_i.$$ 
 Thus, the equation $\p(m)=0$ translates into the homogeneous system of linear differential equations $\p(Y)=AY$. 
 \begin{definition}\label{defn:systmodule}
Let $\cM$  be a $\p$-module over $K$
and  $\{e_1,\dots,e_n\}$ be a $K$-basis of $\cM$.
 We say that the linear differential
 system $\p(Y)= AY$, as above,  is associated  to the  $\p$-module $\cM$ (via the choice of a $K$-basis).  
 Conversely, to a given  
 linear differential system $\p(Y)=AY$, $A=(a_{i,j}) \in K^{n\times n}$, one associates a $\p$-module
 $\cM$ over $K$, namely $\cM=K^n$ with the standard basis $(e_1,\dots,e_n)$ and action of $\p$ given by~\eqref{eq:dm}.
  \end{definition}
 
  \begin{definition} A
 morphism  of $\p$-modules over $K$ is  a  homomorphism of $K[\p]$-modules. 
 \end{definition}

One can  consider  the category  $\Diff_K$ of $\p$-modules over $K$:

 \begin{definition}
We can define the following constructions in $\Diff_K$:
\begin{enumerate}[leftmargin=0.5cm,itemindent=0cm,labelsep=0.55cm,itemsep=0.1cm,align=parleft]
\item The direct sum of two $\p$-modules, $\cM_1$ and $\cM_2$, is $\cM_1 \oplus \cM_2$ together with the action of
$\p$ defined by $$\p(m_1\oplus m_2)= \p(m_1)\oplus \p( m_2).$$ 
\item The tensor product of two $\p$-modules, $\cM_1$ and $\cM_2$, is $\cM_1 \otimes_K \cM_2$ together with the action of
$\p$ defined by $$\p(m_1 \otimes m_2)= \p(m_1)\otimes m_2 +m_1 \otimes \p( m_2).$$ 
\item
The unit object $\bold{1}$ for the tensor product  is
the field $K$ together with  the left $K[\p]$-module structure given by $$(a_0+ a_1 \p +\dots +a_n\p^n)(f)= a_0f+\dots+ a_n \p^n(f)$$ for $f,a_0,\dots,a_n \in K$. 
\item The internal Hom of  two $\p$-modules $\cM_1, \cM_2$ exists in $\Diff_K$ and 
is denoted by $\underline{\Hom}(\cM_1,\cM_2)$. It consists of the $K$-vector space $\Hom_K(\cM_1,\cM_2)$
of  $K$-linear maps from $\cM_1$ to $\cM_2$ together with the action of $\p$ given by the formula 
$$\p u(m_1)= \p(u(m_1))-u(\p m_1).$$ The dual $\cM^ *$ of a $\p$-module $\cM$ is the $\p$-module $\underline{\Hom}(\cM,\bold{1})$.
\item An endofunctor  $D : \Diff_K\to \Diff_K$, called the prolongation functor, is defined as follows: 
 if $\cM$ is an object of $\Diff_K$ corresponding to the
linear differential system $\p (Y)=A Y$, then $D(\cM)$ corresponds to the linear differential system $$\p(Z)=\begin{pmatrix} A & \delta(A) \\ 0& A \end{pmatrix} Z.$$
\end{enumerate}
\end{definition}

The construction of the prolongation functor  reflects
the following. If $U$ is a fundamental solution matrix of $\p(Y)=AY$
in some $\Delta$-field extension  $F$ of $K$, that is, $\p(U)=AU$ and $U \in \GL_n(F)$, then
$$
 \p(\delta U) =\delta(\p U)=\delta(A) U +A \delta (U).
$$
Then, $\begin{pmatrix} U & \delta(U) \\ 0& U \end{pmatrix}$ is a fundamental solution matrix of $\p(Z)=\begin{pmatrix} A & \delta(A) \\ 0& A \end{pmatrix} Z$.
 Endowed with all these constructions, it follows from \cite[Corollary~3]{OvchTannakian} that the category $\Diff_K$ is  a {\em $\delta$-tensor category} (in the sense of \cite[Definition~3]{OvchTannakian} and \cite[Definition~4.2.1]{MosheTAMS}).

\begin{definition}
 Let $\cM$ be an object of $\Diff_K$. Let $\{\cM\}^{\otimes, \delta}$ denote
 the smallest full subcategory of $\Diff_K$ that 
contains $\cM$ and is closed under all operations of linear algebra (direct sums, tensor products, duals, and subquotients)
and $D$. The category  $\{\cM\}^{\otimes, \delta}$ is  a $\delta$-tensor category over~$\k$. Let $\{\cM\}^{\otimes}$ denote the full tensor 
subcategory of $\Diff_K$ generated by $\cM$.   Then, $\{\cM\}^{\otimes}$ is a tensor category over~$\k$.
\end{definition}

Similarly, the category $\Vect_\k$ of finite-dimensional 
$\k$-vector spaces is a $\delta$-tensor category.  The prolongation functor on $\Vect_\k$ is defined as follows: for  a $\k$-vector space
$V$, the $\k$-vector space $D(V)$  equals  $\k[\delta]_{\leq 1}\otimes _\k V$, where $\k[\delta]_{\leq 1}$ is considered as the right $\k$-module
of $\delta$-operators up to order $1$ and $V$ is viewed as a left $\k$-module.

\begin{definition}\label{defn:fiberfunctor}
Let  $\cM$ be an  object of $\Diff_K$.  A $\delta$-fiber functor $\o :\{\cM\}^{\otimes, \delta} \rightarrow \Vect_\k$ is
an exact, faithful, $\k$-linear, tensor compatible functor  together with a natural isomorphism  between $D_{\Vect_\k} \circ \omega$ and 
$\omega\circ D_{ \{\cM\}^{\otimes, \delta}}$ \cite[Definition~4.2.7]{MosheTAMS}, where the subscripts emphasize the category on which  we perform the prolongation. The pair $\big(\{\cM\}^{\otimes, \delta}, \omega\big)$ is called a $\delta$-Tannakian category.
\end{definition}

\begin{theorem}[{\cite[Corollaries~4.29 and~6.2]{GGO}}]
Let $\cM$ be an object of $\Diff_K$.
Since $\k$ is $\delta$-closed,  the category $\{\cM\}^{\otimes, \delta}$ admits a $\delta$-fiber functor and 
 any two 
 $\delta$-fiber functors are naturally isomorphic.
\end{theorem}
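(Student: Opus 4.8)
The plan is to reproduce the classical Tannakian dichotomy --- existence of a fiber functor via a Picard--Vessiot construction, uniqueness via a torsor argument --- adapted to the $\delta$-tensor setting, with differential closedness of $\k$ entering at exactly the two nonformal points. Fix $\cM$ together with an associated system $\p(Y)=AY$, $A\in\Mat_n(K)$, as in Definition~\ref{defn:systmodule}, and write $\cC=\{\cM\}^{\otimes,\delta}$.

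\emph{Existence.} Form the $\Delta$-ring $S=K\{Z,1/\det Z\}$ with $Z=(z_{i,j})$ an $n\times n$ matrix of $\delta$-indeterminates over $K$, and extend $\p$ from $K$ by $\p(Z)=AZ$ (consistent since $\p$ and $\delta$ commute). Pick a maximal $\Delta$-ideal $\mathfrak m\subset S$, set $R=S/\mathfrak m$ and $L=\Quot(R)$; then $R$ is a simple $\Delta$-ring and the image $U$ of $Z$ lies in $\GL_n(R)$ with $\p(U)=AU$. The one genuinely analytic input here is that $L^\p=\k$: because $(\k,\delta)$ is $\delta$-closed it is existentially closed among $\delta$-$\k$-algebras, and a specialization argument in the style of the construction of parameterized Picard--Vessiot extensions shows that no new $\p$-constants appear. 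Now set $\o(\cN)=\ker\bigl(\p,\;R\otimes_K\cN\bigr)$ for $\cN\in\cC$. Because $R$ is $\p$-simple, $R\otimes_K\cN$ is spanned over $R$ by its $\p$-horizontal vectors, and from this one deduces in the usual way that $\o$ is $\k$-linear, exact, faithful, and compatible with direct sums, tensor products, duals and subquotients. For the prolongation: if $U$ is a fundamental matrix for $\cN$, then $\begin{pmatrix} U & \delta(U)\\ 0 & U\end{pmatrix}$ is a fundamental matrix for $D(\cN)$ by the computation recorded right after the definition of the prolongation functor, which yields a natural isomorphism $\o\bigl(D(\cN)\bigr)\cong \k[\delta]_{\le 1}\otimes_\k\o(\cN)=D_{\Vect_\k}\bigl(\o(\cN)\bigr)$. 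This natural isomorphism is the extra datum making $\o$ a $\delta$-fiber functor in the sense of Definition~\ref{defn:fiberfunctor}.

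\emph{Uniqueness.} Let $\o_1,\o_2$ be two $\delta$-fiber functors on $\cC$. Since $\cC$ is a $\delta$-tensor category over $\k$, the presheaf on $\delta$-$\k$-algebras sending $B$ to the set of tensor- and $D$-compatible natural isomorphisms $\o_1\otimes_\k B\Rightarrow\o_2\otimes_\k B$ is representable by an affine $\delta$-scheme $P$ over $\k$ with coordinate $\delta$-ring $\cH$, and $P$ is a torsor under the parameterized differential Galois group $\Galdelta=\underline{\operatorname{Aut}}^{\otimes,\delta}(\o_1)$. Two facts are needed. First, $\Galdelta$ is an LDAG (this uses the parameterized Picard--Vessiot theory together with $\delta$-closedness of $\k$), so $\cH$ is a finitely $\delta$-generated $\delta$-$\k$-algebra. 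Second, being a torsor under a $\delta$-group, $P$ is faithfully flat over $\k$, hence $\cH\neq 0$. A nonzero finitely $\delta$-generated algebra over the $\delta$-closed field $\k$ has a $\k$-point (differential Nullstellensatz), and such a point $\cH\to\k$ unwinds to a natural isomorphism $\o_1\xrightarrow{\ \sim\ }\o_2$, automatically compatible with the tensor structure and with $D$ because it lies in $P(\k)$.

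\emph{Where the difficulty lies.} The formal skeleton above --- the Picard--Vessiot construction, the $\operatorname{Isom}$-torsor formalism, the final appeal to the Nullstellensatz --- is routine once the $\delta$-Tannakian language is in place. The real content sits in two places. For existence it is the no-new-constants statement $L^\p=\k$, which is precisely where $\delta$-closedness of $\k$ is used and which requires the careful specialization argument of the theory of parameterized Picard--Vessiot extensions. For uniqueness it is verifying that the $\operatorname{Isom}$ presheaf is represented by a \emph{nonzero, finitely $\delta$-generated} $\delta$-$\k$-algebra, which rests on the full $\delta$-tensor reconstruction machinery --- in particular on the compatibility of $\o_1,\o_2$ and of every linear-algebra construction with the prolongation functor $D$ --- after which $\delta$-closedness of $\k$ finishes the argument in one line.
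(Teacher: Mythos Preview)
The paper does not prove this theorem: it is stated with a citation to \cite[Corollaries~4.29 and~6.2]{GGO} and used as a black box, so there is no ``paper's own proof'' to compare against. Your outline is essentially the standard argument behind those cited results (and behind the parameterized Picard--Vessiot theory of Cassidy--Singer and Hardouin--Singer): build a PPV ring to produce a $\delta$-fiber functor, then run the torsor argument for uniqueness and invoke the differential Nullstellensatz over the $\delta$-closed field $\k$.

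Two small points worth tightening, both of which you flag yourself as the nonformal content. First, after choosing a maximal $\Delta$-ideal $\mathfrak m$ you pass to $L=\Quot(R)$; this presupposes that the $\Delta$-simple ring $R$ is a domain, which is not automatic in the parameterized setting (unlike the finitely generated non-parameterized case) and is typically established together with the constants computation $R^\p=\k$. Second, in the uniqueness step, the assertion that the $\operatorname{Isom}$-presheaf is representable by a nonempty affine $\delta$-scheme of finite $\delta$-type is exactly the $\delta$-Tannakian reconstruction theorem; you are right that once this is in hand, $\delta$-closedness of $\k$ gives a $\k$-point immediately. Your sketch correctly isolates these as the places where the genuine work of \cite{GGO} lies.
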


\begin{definition}\label{defn:difftannakautogroup}
Let $\cM$ be an object of $\Diff_\k$ and $\o :\{\cM\}^{\otimes, \delta} \rightarrow \Vect_\k$ be a $\delta$-fiber functor. 
The group $\Gal^\delta(\cM)$ of $\delta$-tensor isomorphisms  of $\omega$ is defined as follows. It consists
of  the elements $ g\in\GL(\o(\cM))$ that stabilize $\omega(\mathcal{V})$ for  every $\p$-module  $\mathcal{V}$ obtained from $\cM$ by applying the linear constructions (subquotient, direct sum, tensor product, and dual), and the 
prolongation functor. The action of $g$ on $\omega(\cV)$ is obtained by applying the same constructions to $\omega(\cV)$. We call $\Gal^\delta(\cM)$ the {\em parameterized differential Galois group} of $(\cM,\omega)$ (or of $\cM$ when there is no confusion).\end{definition}

\begin{theorem}[{\cite[Theorem~2]{OvchTannakian}}]\label{thm:difftanequ}
Let $\cM$ be an object of $\Diff_K$ and $\o :\{\cM\}^{\otimes, \delta} \rightarrow \Vect_\k$ be a $\delta$-fiber functor. The group  $\Gal^\delta(\cM) \subset \GL(\o(\cM))$ is a linear differential algebraic group defined over $\k$,  and $\o$ induces an equivalence of  categories between $\{\cM\}^{\otimes, \delta}$
and the category of finite-dimensional  representations of $\Gal^\delta(\cM)$.
\end{theorem}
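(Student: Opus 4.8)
The plan is to deduce this $\delta$-Tannakian reconstruction from the classical non-parameterized Tannakian duality applied at each finite ``prolongation level'', and then to upgrade the resulting pro-algebraic group to a linear differential algebraic group using the compatibility of $\omega$ with the prolongation functor $D$. First I would filter $\{\cM\}^{\otimes,\delta}$ by the full tensor subcategories $\cC_n$ generated, under direct sums, tensor products, duals, and subquotients, by $\cM, D\cM,\dots,D^n\cM$, so that $\{\cM\}^{\otimes,\delta}=\bigcup_{n\ge 0}\cC_n$. Each $\cC_n$ is a neutral Tannakian category over $\k$ --- it is rigid, abelian, $\k$-linear, has $\mathrm{End}(\mathbf 1)=\k$, and $\omega|_{\cC_n}$ is an exact faithful $\k$-linear tensor functor into $\Vect_\k$ --- and it is generated by the single object $\bigoplus_{i=0}^n D^i\cM$; classical Tannakian duality therefore yields a linear algebraic group $G_n$ over $\k$ with an equivalence $\cC_n\simeq\mathrm{Rep}(G_n)$ compatible with $\omega$. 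The inclusions $\cC_n\hookrightarrow\cC_{n+1}$ are fully faithful tensor functors closed under subquotients, so they induce surjective homomorphisms $G_{n+1}\twoheadrightarrow G_n$, and passing to the colimit gives $\{\cM\}^{\otimes,\delta}\simeq\varinjlim_n\mathrm{Rep}(G_n)=\mathrm{Rep}\big(\varprojlim_n G_n\big)$ as tensor categories.

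Next I would equip the tower $(G_n)_n$ with a differential structure. Fix $V=\omega(\cM)$ with a basis, so $\mathcal O(G_0)$ is a quotient of $\k[y_{jk},1/\det(y)]$. The functor $D$ carries $\cC_n$ into $\cC_{n+1}$, and the natural isomorphism $D_{\Vect_\k}\circ\omega\cong\omega\circ D$ of Definition~\ref{defn:fiberfunctor} expresses the action of $G_n$ on the $\omega$-images of $D\cM,\dots,D^n\cM$ through prolongations of its action on $V$; comparing matrix coefficients (this is where the block shape $\begin{pmatrix}A&\delta(A)\\0&A\end{pmatrix}$ of the prolongation enters) exhibits $\mathcal O(G_n)$ as a quotient of $\mathcal O(\nabla_n\GL(V))=\k[y_{jk},\delta y_{jk},\dots,\delta^n y_{jk},1/\det(y)]$ in such a way that $y_{jk}\mapsto\delta y_{jk}$ prolongs to a $\k$-linear, Hopf-compatible derivation $\mathcal O(G_n)\to\mathcal O(G_{n+1})$. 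Passing to the colimit, $\mathcal A:=\bigcup_n\mathcal O(G_n)$ becomes a $\k$-$\Delta$-Hopf algebra, a quotient of $\k\{y_{jk},1/\det(y)\}$ by a Hopf $\delta$-ideal $\mathfrak q$; since $\k\{y_{jk},1/\det(y)\}$ is Ritt--Noetherian, $\mathfrak q$ is the radical $\delta$-ideal generated by finitely many $\delta$-polynomials, so $G:=\mathbf V(\mathfrak q)\subset\GL(V)$ is a Kolchin-closed subgroup defined over $\k$, i.e.\ an LDAG, with $\delta$-coordinate ring $\mathcal A$ and $n$-th prolongation $\nabla_n G=G_n$.

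Then I would identify $G$ with $\Gal^\delta(\cM)$ and conclude. Using $\nabla_n G=G_n$, a point of $G$ with values in a $\k$-$\Delta$-algebra is the same as a compatible family of tensor automorphisms of the functors $\omega|_{\cC_n}$ acting on each $\omega(D^i\cM)$ through the prolongation of the action on $V$; by the construction of the derivation on $\mathcal A$ this is exactly the $\delta$-tensor compatibility of Definition~\ref{defn:difftannakautogroup}, whence $G=\Gal^\delta(\cM)$ as LDAGs over $\k$. For the categorical statement, every $\delta$-polynomial representation $G\to\GL(W)$ has bounded order and hence factors through the jet map $G\to\nabla_n G=G_n$ for some $n$, while conversely pullback along $G\to G_n$ embeds $\mathrm{Rep}(G_n)$ into $\mathrm{Rep}_\delta(G)$ compatibly with tensor products and with the prolongation functors on both sides; thus $\mathrm{Rep}_\delta(G)=\varinjlim_n\mathrm{Rep}(G_n)$ as $\delta$-tensor categories. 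Combining with the first paragraph, $\omega$ induces $\{\cM\}^{\otimes,\delta}\simeq\varinjlim_n\mathrm{Rep}(G_n)=\mathrm{Rep}_\delta(\Gal^\delta(\cM))$, an equivalence that is $\k$-linear, exact, tensor, and compatible with the prolongation functors, as claimed.

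The step I expect to be the main obstacle is the second one: promoting the pro-algebraic group $\varprojlim_n G_n$ to a \emph{finite-type} differential algebraic group, equivalently showing that $\Gal^\delta(\cM)$ is cut out by $\delta$-polynomial equations of bounded order. The rest is, in effect, the classical Tannakian formalism pushed through a colimit, but this step uses in an essential way both the explicit Leibniz (block-matrix) form of the prolongation functor $D$ and the Ritt--Noetherianity of the differential polynomial ring; without the latter one would obtain only a pro-algebraic Galois group, and $\{\cM\}^{\otimes,\delta}$ would fail to be governed by a single LDAG.
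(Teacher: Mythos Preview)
The paper does not prove this theorem at all: it is stated with the attribution \cite[Theorem~2]{OvchTannakian} and no proof follows in the text, so there is no ``paper's own proof'' to compare your attempt against.

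That said, your outline is essentially the strategy of the cited source. The filtration $\{\cM\}^{\otimes,\delta}=\bigcup_n\cC_n$ by the ordinary Tannakian subcategories generated by $\cM,D\cM,\ldots,D^n\cM$, the application of Deligne's reconstruction to each $\cC_n$, the resulting inverse system $G_{n+1}\twoheadrightarrow G_n$, and the assembly of the colimit Hopf algebra into a $\delta$-Hopf algebra via the prolongation compatibility of $\omega$ are exactly the ingredients used there. Your identification of the delicate point is also accurate: one must check that the colimit $\mathcal A=\varinjlim\mathcal O(G_n)$ is a \emph{finitely $\delta$-generated} $\k$-algebra, i.e.\ that $\Gal^\delta(\cM)$ is an LDAG rather than merely a pro-algebraic group. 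Two remarks on that step. First, Ritt--Noetherianity as quoted in the paper (and in your argument) concerns \emph{radical} $\delta$-ideals, so you should justify that $\mathfrak q$ is radical; this holds because each $\mathcal O(G_n)$ is reduced (Cartier, $\mathrm{char}\,\k=0$), hence so is $\mathcal A$. Second, the assertion $\nabla_nG=G_n$ is not automatic from the construction and is in fact the heart of the matter: one must verify that the surjection $\k\{y_{jk},1/\det(y)\}\twoheadrightarrow\mathcal A$ identifies the subalgebra generated by the $\delta^{\le n}y_{jk}$ with $\mathcal O(G_n)$, which amounts to checking that the matrix coefficients of $\omega(D^i\cM)$ generate $\mathcal O(G_i)$ and that the derivation you build really sends $\mathcal O(G_n)$ into $\mathcal O(G_{n+1})$ compatibly with the transition maps. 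Once that is in place, the rest of your argument goes through.
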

\begin{definition}\label{defn:trivial objects}
We say that a $\p$-module $\cM$ over $K$ is \textit{trivial} if it is either $(0)$ or isomorphic as a $\p$-module over $K$ to  $\bold{1}^n$ for 
some positive integer $n$. For $G$ a linear differential algebraic group over $\k$, we say that 
a $G$-module $V$ is \textit{trivial} if   $G$ acts identically  on $V$. 
\end{definition}
\begin{remark}
For $\cM$ an object of $\Diff_K$ and $\o :\{\cM\}^{\otimes, \delta} \rightarrow \Vect_\k$  a $\delta$-fiber functor, the following holds:
a $\p$-module $\cN $ in $\{\cM\}^{\otimes, \delta}$ is trivial if and only if $\o(\cN)$ is a trivial $\Gal^\delta(\cM)$-module.
\end{remark}

\begin{definition}
Forgetting the action of $\delta$,  one can similarly define the group $\Gal(\cM)$ of tensor isomorphisms of $\omega : \{\cM\}^{\otimes} \rightarrow \Vect_\k$. By \cite{Deligne:categoriestannakien}, the group $\Gal(\cM) \subset \GL(\o(\cM))$ is a linear algebraic group defined over $\k$, and $\omega$ induces an equivalence of categories between $\{\cM\}^{\otimes}$ and the 
category of  $\k$-finite-dimensional representations of $\Gal(\cM)$. We call  $\Gal(\cM)$ the {\em differential Galois group} of 
$\cM$ over~$K$.
\end{definition}

\begin{proposition}[{\cite[Proposition~6.21]{HardouinSinger}}]\label{prop:zariskidensegalois groups}
If $\cM$ is an object of $\Diff_K$ and $\o :\{\cM\}^{\otimes, \delta} \rightarrow \Vect_\k$ is a $\delta$-fiber functor, then $\Gal^\delta(\cM)$ is a Zariski dense subgroup of $\Gal(\cM)$.
\end{proposition}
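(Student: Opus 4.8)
The plan is to use the Tannakian dictionary (Theorems~\ref{thm:difftanequ} and the preceding one on $\Gal(\cM)$) to reduce the statement to a purely categorical fact: the inclusion functor $\{\cM\}^{\otimes} \hookrightarrow \{\cM\}^{\otimes,\delta}$ comparing the two fiber-functor constructions. Concretely, both $\Gal^\delta(\cM)$ and $\Gal(\cM)$ live inside $\GL(\o(\cM))$, so what needs to be shown is that the Zariski closure of $\Gal^\delta(\cM)$ in $\GL(\o(\cM))$ equals $\Gal(\cM)$.

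The key steps, in order, would be as follows. First, observe that $\Gal^\delta(\cM)\subseteq\Gal(\cM)$: every element of $\Gal^\delta(\cM)$ stabilizes $\o(\cV)$ for each $\cV$ obtained from $\cM$ by the linear constructions \emph{together with} prolongations, hence a fortiori stabilizes the smaller collection of such $\cV$ built using only the linear constructions, which is exactly the defining condition for membership in $\Gal(\cM)$. By Proposition~\ref{propo:defzariksidenserelationalggroupanddiffalggroup}, the Zariski closure $H:=\overline{\Gal^\delta(\cM)}$ is a linear algebraic group defined over $\k$ contained in $\Gal(\cM)$. Second, I would argue $H=\Gal(\cM)$ by a Tannakian/contradiction argument: if $H$ were a proper closed subgroup, then by the Chevalley-type construction in the (non-differential) Tannakian formalism there is an object $\cW$ in $\{\cM\}^{\otimes}$ and a line (or sub-line-bundle) $L\subseteq\o(\cW)$ that is stabilized by $H$ but not by all of $\Gal(\cM)$. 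Since $\{\cM\}^{\otimes}\subseteq\{\cM\}^{\otimes,\delta}$, the object $\cW$ also lies in the $\delta$-generated category, and by Theorem~\ref{thm:difftanequ} the sub-line $L$ corresponds to a sub-$\p$-module of $\cW$; hence $L$ is stabilized by all of $\Gal^\delta(\cM)$, and therefore by its Zariski closure $H$ — but $L$ being $H$-stable while not $\Gal(\cM)$-stable is exactly what we assumed, so the only obstruction is to show such an $L$ cannot exist, i.e. that $\Gal^\delta(\cM)$ already "sees" every sub-object that $\Gal(\cM)$ does not. The clean way to close this is: any line $L\subseteq\o(\cW)$ fixed (pointwise or as a subspace) by $\Gal^\delta(\cM)$ corresponds, via Theorem~\ref{thm:difftanequ}, to a rank-one $\p$-submodule of $\cW$, and the correspondence for $\Gal(\cM)$ (the non-$\delta$ Tannakian equivalence) then forces this same submodule to be $\Gal(\cM)$-stable; thus the lattice of $\Gal^\delta(\cM)$-submodules of $\o(\cW)$ coincides with that of $\Gal(\cM)$-submodules for every $\cW\in\{\cM\}^{\otimes}$, and a linear algebraic group is determined by its invariant-subspace lattice on the objects of its Tannakian category together with the induced actions, so $\overline{\Gal^\delta(\cM)}=\Gal(\cM)$.

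The main obstacle is making precise the last sentence — that two closed subgroups of $\GL(\o(\cM))$ with the "same" stabilized subspaces across the whole tensor category $\{\cM\}^{\otimes}$ must be equal. The honest route is not to reprove this but to invoke it: the pair $\big(\{\cM\}^{\otimes},\o|_{\{\cM\}^{\otimes}}\big)$ is a neutral Tannakian category, and its Tannakian group is by definition the full stabilizer group $\Gal(\cM)$; if $H=\overline{\Gal^\delta(\cM)}$ acts on each $\o(\cW)$ preserving exactly the subobjects coming from $\{\cM\}^{\otimes}$-subobjects of $\cW$, then $\{\cM\}^{\otimes}$ is equivalent to $\mathrm{Rep}(H)$ as well, so $H$ and $\Gal(\cM)$ represent the same Tannakian fundamental group and hence $H=\Gal(\cM)$ as subgroups of $\GL(\o(\cM))$. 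I would therefore organize the write-up as: (i) inclusion $\Gal^\delta(\cM)\subseteq\Gal(\cM)$ and that the closure is algebraic; (ii) the subobject-matching via Theorem~\ref{thm:difftanequ} and the $\delta$-analogue; (iii) the Tannakian reconstruction step identifying the closure with $\Gal(\cM)$. Alternatively — and this is likely the shortest path — one can cite that this is precisely \cite[Proposition~6.21]{HardouinSinger}, whose proof runs exactly along these lines; but the sketch above is the argument I would reconstruct if asked to be self-contained.
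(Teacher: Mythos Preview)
The paper does not supply its own proof of this proposition; it simply records the statement with a citation to \cite[Proposition~6.21]{HardouinSinger}. So there is nothing to compare your argument against in this paper itself.

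That said, your sketch is the standard Tannakian argument and is essentially correct: the inclusion $\Gal^\delta(\cM)\subset\Gal(\cM)$ is immediate from the definitions, and the Chevalley step --- a proper closed subgroup $H\subsetneq\Gal(\cM)$ would stabilize a line $L\subset\o(\cW)$ for some $\cW\in\{\cM\}^{\otimes}$ not stabilized by $\Gal(\cM)$, but $L$ is then $\Gal^\delta(\cM)$-stable, hence by the $\delta$-Tannakian equivalence corresponds to a $\p$-submodule of $\cW$, hence by the ordinary Tannakian equivalence is $\Gal(\cM)$-stable, contradiction --- is exactly the argument one finds in Hardouin--Singer. Your write-up wobbles a bit in the middle paragraph (the sentence ending ``\ldots is exactly what we assumed, so the only obstruction is to show such an $L$ cannot exist'' reads as circular before you restate the point cleanly in the next sentence), and the appeal to ``a linear algebraic group is determined by its invariant-subspace lattice'' is vaguer than necessary; the Chevalley line argument you give immediately afterward is the precise statement you want, so just lead with that.
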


\begin{definition}
A $\p$-module $\cM$ is said to be {\em completely reducible} ({\em semisimple}) if, for every $\p$-submodule $\cN$ of $\cM$, there exists
a $\p$-submodule $\cN'$ of $\cM$ such that $\cM=\cN \oplus \cN'$. We say that a $\p$-operator is completely reducible if 
the associated $\p$-module is completely reducible.  
\end{definition}
By \cite[Exercise 2.38]{vdPutSingerDifferential},
a $\p$-module is completely reducible if and only if its  differential Galois group is a reductive linear algebraic group.
Moreover, for  a completely reducible $\p$-module $\cM$, any object in $\{\cM\}^\otimes$ is completely
reducible.

\begin{definition}
A linear differential algebraic group $G\subset\Gl(V)$ is called \emph{constant} (cf. \cite[Definition~4.1]{GO}, and also \cite[Definition~2.10]{MinOvSingunip} for reductive LDAGs) if $G$ preserves a $C$-form of $V$. That is, if one considers $G$ as a group of matrices, $G$ is conjugate to a group of matrices with constant entries. 
\end{definition}

\begin{definition}\label{def:purconst} We say that a differential module $\cV$ is {\em constant} (also known as isomonodromic or completely integrable) if $\Galdelta(\cV)$ is constant (see \cite[Proposition~3.9]{cassisinger}, \cite[Section~6]{GO}). We call a semisimple differential module \emph{purely non-constant} if every its simple submodule is not constant.\end{definition}

In the language of matrices, if $\p(Y)=AY$ is a system of linear differential equations associated to $\cV$, then $\cV$ is constant if and only if there exists an $n\times n$ matrix $B$ with entries in $K$ such that $\p(B)-\delta(A)=AB-BA$.

\section{The algorithm}\label{sec:Algorithm}
 For simplicity, let $(\k,\delta)$ be a differentially closed field. Let also $K = \k(x)$ be a $\Delta$-field, where $\Delta=\{\p,\delta\}$, and $\p=\partial/\partial_x$. So, $\k = K^\p$.
Let $\cV$ be a $\p$-module over  $K$, $\dim\cV = 3$, $\omega$ a $\delta$-fiber functor, and so $V=\omega(\cV)$ is a faithful $\Galdelta(\cV)$-module (to avoid repetition, $\p$-modules over $K$ will be denoted by calligraphic letters, and the corresponding regular letters will be used for their images under $\omega$).

In what follows, we start with preliminary results our algorithm-specific definitions in Section~\ref{sec:prep}, where we also explain what is sufficient to know to consider $\Galdelta(\cV)$ computed. We continue with a description of the algorithm by considering the following cases that can possibly occur:
\begin{enumerate}[leftmargin=0.5cm,itemindent=0cm,labelsep=-0.5cm,itemsep=0.1cm,align=left]
\item Optimized for $\dim\cV = 3$, and so more efficiently than in~\cite{MinOvSing}, we start with computing $\Galdelta(\cV^{\diag})$ (see Definition~\ref{def:diag}) in Section~\ref{sec:compdiag}, which also covers the case of $\cV \cong \cV^{\diag}$, that is $\cV$ being semisimple. For checking the latter isomorphism, see  \cite[Section~4.2]{vdPutSingerDifferential} and the references given there.
\item We then proceed with computing $\Galdelta(\cV)$ if $\cV$ is decomposable (can be represented as a direct sum of a  $2$-dimensional not semisimple and a $1$-dimensional  submodules)  in Section~\ref{sec:decompcase}.
\item If $\cV$ is indecomposable and $\cV^{\diag}$ has a $2$-dimensional simple submodule, then we compute $\Galdelta(\cV)$ in Section~\ref{sec:indeccase2dim}.
\item The remaining case of indecomposable $\cV$ with $\cV^{\diag}$ being a direct sum of three $1$-dimensional submodules in Section~\ref{sec:idecuppertriang}.
\end{enumerate}

\subsection{Preparation}\label{sec:prep}
If $G=\Galdelta(\cV)$, as explained above, it can be  identified with an LDAG in $\Gl(V)$, where $V=\o(\mathcal{V})$. To compute $G$ means to provide an algorithm that, given $\cV$, returns a (finite) set of equations defining $G$ in the ring of differential polynomials in matrix coefficients, with respect to some basis of~$V$. Note that $\overline{G}$ can be computed as it is $\Gal(\cV)$  \cite{Hrushcomp,RFdifferential2015}. If one knows a priori that $\tau(G)=0$, one can compute $G$ using \cite{MinOvSingunip}.

Our goal now is to explain an approach to computing $G$ in the case $\dim\cV=3$. By \cite{Dreyfus:density,MitSingMonod}, since $K=\k(x)$,  $G$ is a DFGG, which will be essential. For example, if $\varrho: G\to \Gl(W)$ is a 1-dimensional representation of $G$, then $\tau(\varrho(G))=0$.   Note that the case $\dim\cV= 2$ has already been considered in \cite{CarlosISSAC,ArrecheJSC,Dreyfus}.

Our approach is related to the following. Let $(H_i,V_i)$, $i\in I$, be pairs of algebraic groups $H_i\subset\Gl(V)$ containing $G$, and algebraic $H_i$-modules $V_i$, where $I$ is a non-empty finite set. Let $\varrho_i:H_i\to\Gl(V_i)$ denote the homomorphisms defining the $H_i$-module structure on $V_i$. 
\begin{definition}\label{def:determined}
We say that $G$ is
\begin{itemize}[leftmargin=0.35cm,itemindent=0cm,labelsep=0.3cm,itemsep=0.1cm,align=parleft]
\item \emph{determined by the pairs} $(H_i,V_i)$, $i\in I$, if $G$ has finite index in the intersection $P$ of all $\varrho_i^{-1}(G_i)$, where $G_i:=\varrho_i(G)$. In other words, $G^\circ=P^\circ$, or equivalently, due to the bijection between the finite sets of connected components of $G$ and of $\overline{G}$~\cite[Corollary~3.7]{MinOvSing},
$$
G=P\cap\overline{G}.
$$
\item
 {\em determined by} $V_i$ if $H_i=\overline{G}$, $i\in I$. In this case, $G=P$.
\end{itemize}
\end{definition}
\begin{remark}
 It follows that $G$ is determined by $V_i$ if it is determined by some $(H_i, V_i)$.
 \end{remark}
 \begin{remark}
  Note that $V_i$ are $G$-modules from the rigid tensor category generated by $V$ and, if $H_i=\overline{G}$, $i\in I$, this property characterizes them. 
  \end{remark}
  This notion can be applied for computation of $G$. Namely, suppose it is known that $G$ is determined by $(H_i,V_i)$ and suppose that we know how to compute $G_i$. Then we can compute
$$
G=\bigcap_{i\in I}\varrho_i^{-1}(G_i)\cap\overline{G}
$$ 
since $\varrho_i$ are given (the intersection with $\overline{G}$ is needed only if $G\neq P$). Furthermore, if $F_i\subset \k\{\Gl(V_i)\}$, $J_i, J\subset \k\{\Gl(V)\}$,  $i\in I$, are sets of generators of the defining ideals of $G_i$, $H_i$, and $H$, respectively, then the defining ideal for $G$ is generated by
$$
\bigcup_{i\in I}\nu\varrho_i^*(F_i)\cup J_i\cup J,
$$
where $\nu$ is a $k$-linear section of $\k\{\Gl(V)\}\to \k\{H_i\}$. Again, union with $J$ is only needed if we do not know whether $G=P$. 

\begin{proposition}\label{prop:index}
Let $G_i\subset P_i$, $i=1,2$, be subgroups of finite index and all $P_i$ are subgroups of a group~$H$. Then $G_1\cap G_2\subset P_1\cap P_2$ has finite index.
\end{proposition}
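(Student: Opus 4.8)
The plan is to reduce everything to the classical fact (Poincaré's lemma) that the intersection of two finite-index subgroups of a single group again has finite index, applied inside $Q := P_1 \cap P_2$. So first I would set $Q = P_1\cap P_2 \subseteq H$ and note that $G_1 \cap G_2 = (G_1\cap Q)\cap (G_2\cap Q)$ as subgroups of $Q$; thus it is enough to show that each $G_i \cap Q$ has finite index in $Q$, and then that the intersection of these two finite-index subgroups of $Q$ has finite index in $Q$.

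For the first point, I would observe that since $Q \subseteq P_i$, every left coset of $G_i \cap Q$ in $Q$ is contained in a unique left coset of $G_i$ in $P_i$; concretely, the assignment $q(G_i\cap Q) \mapsto qG_i$ is a well-defined injection from the coset space $Q/(G_i\cap Q)$ into $P_i/G_i$ (if $qG_i = q'G_i$ with $q,q'\in Q$, then $q^{-1}q' \in G_i \cap Q$). Hence $[Q : G_i\cap Q] \le [P_i : G_i] < \infty$ for $i = 1,2$. For the second point, with $A := G_1\cap Q$ and $B := G_2\cap Q$, the map $Q/(A\cap B) \to Q/A \times Q/B$ sending $q(A\cap B) \mapsto \big(qA,\, qB\big)$ is well defined and injective, so $[Q : A\cap B] \le [Q:A]\cdot[Q:B]$. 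Combining the two estimates gives $[P_1\cap P_2 : G_1\cap G_2] \le [P_1:G_1]\cdot[P_2:G_2] < \infty$, as desired.

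There is no real obstacle here; the argument is entirely elementary. The only point that requires a little care is that no normality is assumed, so $Q/(A\cap B)$, $Q/A$, etc.\ are merely sets of left cosets and not quotient groups — one must therefore verify directly that the coset maps above are well defined and injective, rather than appealing to the first isomorphism theorem. (If one prefers, the same conclusion also follows from the orbit–counting observation that $Q$ acts on the finite set $P_1/G_1 \times P_2/G_2$ and the stabilizer of the base point $(G_1,G_2)$ is exactly $G_1\cap G_2\cap Q = G_1\cap G_2$.)
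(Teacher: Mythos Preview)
Your proof is correct. In fact, the parenthetical alternative you offer at the end---letting $Q=P_1\cap P_2$ act on the finite set $P_1/G_1\times P_2/G_2$ and observing that the stabilizer of the base point is $G_1\cap G_2$---is exactly the paper's proof. Your main argument is an equivalent but more explicit coset-counting version of the same idea, split into two steps (first passing from $P_i$ to $Q$, then intersecting inside $Q$); it has the minor advantage of yielding the explicit bound $[P_1\cap P_2:G_1\cap G_2]\le[P_1:G_1]\cdot[P_2:G_2]$, whereas the paper's one-line orbit--stabilizer argument is terser but gives the same bound implicitly through $|X|$.
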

\begin{proof}
The groups $P_i$ act naturally on the finite sets $X_i:=P_i/G_i$, $i=1,2$. This gives rise to the action of $P_1\times P_2$ on $X:=X_1\times X_2$. The group $P:=P_1\cap P_2$ embedded diagonally into $P_1\times P_2$ also acts on $X$ with the stabilizer of the point $eG_1\times eG_2\in X$ equal $G:=G_1\cap G_2$. Since $X$ is finite, $G$ has finite index in $P$.
\end{proof}

\begin{definition}\label{def:diag}
Let us denote  the sum of composition factors for a maximal filtration of $\cV$ by $\cV^{\diag}$. Set also $V^{\diag}:=\o(\cV^{\diag})$.  
\end{definition}

One can compute $\Galdelta(\cV^{\diag})$ using \bl{\cite{MinOvSing}}. However, this can be done simpler in our case of three dimensions.

\begin{lemma}[{\cite[Equation (1), p.~195]{CassSingerJordan}}]\label{lem:inequtype}
Let   $G$  be a linear differential algebraic group and $H$ be  a normal differential algebraic subgroup of $G$. Then $\tau(G) = \max\{\tau(H),\tau(G/H)\}$.\end{lemma}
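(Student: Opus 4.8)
The plan is to reduce to the case where $G$ and $H$ are connected, observe that the inequality $\tau(G)\ge\max\{\tau(H),\tau(G/H)\}$ is a cheap consequence of the monotonicity of transcendence degree under passing to subrings and to quotient domains, and then obtain the reverse inequality by extracting additivity of the differential transcendence degree from an explicit isomorphism $G\times_{G/H}G\cong G\times H$.

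First I would reduce to $G$ and $H$ connected. By definition $\tau$ depends only on identity components; writing $\pi\colon G\to G/H$ for the quotient morphism, one has $\pi(G^\circ)=(G/H)^\circ$ and $\pi(G^\circ)\cong G^\circ/(G^\circ\cap H)$, while $(G^\circ\cap H)^\circ=H^\circ$, so replacing the pair $(G,H)$ by $(G^\circ,\,G^\circ\cap H)$ leaves all three of $\tau(G),\tau(H),\tau(G/H)$ unchanged. Since a surjective $\delta$-polynomial homomorphism with finite kernel induces an algebraic extension of the fraction fields of the $\delta$-coordinate rings, I may then further replace $H$ by $H^\circ$ (which is normal in $G$, being characteristic in $H$) without changing $\tau(G/H)$ or $\tau(H)$. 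So assume $G$ and $H$ are connected; then $\k\{G\}$, $\k\{H\}$ and $\k\{Q\}$, where $Q:=G/H$, are integral domains, whose fraction fields I denote $\k(G),\k(H),\k(Q)$.

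For $\tau(G)\ge\max\{\tau(H),\tau(G/H)\}$: $\k\{H\}=\k\{G\}/\I(H)$ is a quotient of $\k\{G\}$ by a prime $\delta$-ideal, so $\trdeg_\k\k(H)\le\trdeg_\k\k(G)$; and $\pi^*\colon\k\{Q\}\hookrightarrow\k\{G\}$ is injective, so $\trdeg_\k\k(Q)\le\trdeg_\k\k(G)$. For the reverse inequality I would form the fibre product $G\times_Q G=\{(a,b)\in G\times G\mid\pi(a)=\pi(b)\}$, a Kolchin-closed subgroup of $G\times G$, and note that $(a,b)\mapsto(a,a^{-1}b)$ and $(a,h)\mapsto(a,ah)$ are mutually inverse $\delta$-polynomial maps identifying $G\times_Q G$ with $G\times H$ (indeed $a^{-1}b\in\ker\pi=H$ when $\pi(a)=\pi(b)$, and $\pi(ah)=\pi(a)$). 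Hence $\k\{G\times_Q G\}\cong\k\{G\}\otimes_\k\k\{H\}$; on the other hand $\k\{G\times_Q G\}=\k\{G\}\otimes_{\k\{Q\}}\k\{G\}$, since the former is a domain and so the natural surjection onto it from the latter is injective. Taking $\trdeg_\k$ of the corresponding fraction fields and using the product and tower formulas for transcendence degree (valid in $\N\cup\{\infty\}$), together with the fact that $\k(G)$ is a regular extension of $\k(Q)$, one gets
$$
\trdeg_\k\k(G)+\trdeg_\k\k(H)=\trdeg_\k\k(Q)+2\,\trdeg_{\k(Q)}\k(G)\quad\text{and}\quad\trdeg_\k\k(G)=\trdeg_\k\k(Q)+\trdeg_{\k(Q)}\k(G),
$$
whence $\trdeg_{\k(Q)}\k(G)=\trdeg_\k\k(H)$ and therefore $\trdeg_\k\k(G)=\trdeg_\k\k(Q)+\trdeg_\k\k(H)$. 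Thus $\trdeg_\k\k(G)$ is finite exactly when both $\trdeg_\k\k(Q)$ and $\trdeg_\k\k(H)$ are, which combined with the previous paragraph gives $\tau(G)=\max\{\tau(H),\tau(G/H)\}$.

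The step I expect to be the main obstacle is the transcendence-degree bookkeeping just used, specifically the claim that $\k(G)$ is a regular (or at least primary) extension of $\k(Q)$ — this is what makes $\k(G)\otimes_{\k(Q)}\k(G)$ a domain with fraction field of transcendence degree $2\,\trdeg_{\k(Q)}\k(G)$ over $\k(Q)$. I would justify it by observing that the generic fibre of $\pi\colon G\to Q$ is a torsor under the connected group $H$ and is therefore geometrically irreducible. Alternatively, one can bypass the explicit computation entirely and invoke directly the additivity of Kolchin's differential dimension polynomial in the exact sequence $1\to H\to G\to Q\to 1$, from which $\tau(G)=\max\{\tau(H),\tau(G/H)\}$ is immediate.
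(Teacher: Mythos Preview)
The paper does not supply its own proof of this lemma; it is stated with a citation to Cassidy--Singer and used as a black box. There is therefore no argument in the paper to compare yours against. Your closing remark that one can ``invoke directly the additivity of Kolchin's differential dimension polynomial in the exact sequence $1\to H\to G\to Q\to 1$'' is essentially what the cited reference provides, and is the standard route.

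Your direct argument via the fibre product is a reasonable alternative, but two steps need repair. First, the sentence ``since the former is a domain and so the natural surjection onto it from the latter is injective'' is not a valid inference: a surjection onto a domain need not be injective. What is actually required is that the \emph{source} $\k\{G\}\otimes_{\k\{Q\}}\k\{G\}$ is already reduced (indeed a domain), and that is exactly the regularity of $\k(G)/\k(Q)$ you later invoke. Second, the passage from your two displayed identities to $\trdeg_{\k(Q)}\k(G)=\trdeg_\k\k(H)$ uses cancellation, which is not valid in $\N\cup\{\infty\}$: if $\trdeg_{\k(Q)}\k(G)=\infty$ both identities hold with both sides infinite and nothing is learned. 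The fix is to argue directly that regularity makes $p_1^*\k(G)$ and $p_2^*\k(G)$ algebraically disjoint over $\k(Q)$ inside $\k(G\times_QG)$; together with the isomorphism $G\times_QG\cong G\times H$ this yields $\trdeg_{\k(Q)}\k(G)=\trdeg_\k\k(H)$ without any subtraction, and the desired equality $\tau(G)=\max\{\tau(H),\tau(G/H)\}$ follows.
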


\begin{definition}[{\cite[Definition~2.6]{CassSingerJordan}}] The {\em strong identity component} $G_0$ of an LDAG $G$ is defined to be the smallest $\delta$-subgroup $H$ of $G$ such that $\tau(G/H) < \tau(G)$. 
An LDAG $G$ is called {\em strongly connected} if $G_0 = G$.
\end{definition}
\begin{lemma}\label{lem:strongidepi}If $G$ and $G'$ are LDAGs, $\varphi : G \to G'$ is a surjective homomorphism, and $\tau(G)=\tau(G')$, then $\varphi(G_0)=G'_0$.
\end{lemma}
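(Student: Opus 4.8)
The plan is to exploit the characterization of $G_0$ via the differential type: $G_0$ is the \emph{smallest} $\delta$-subgroup $H\subseteq G$ with $\tau(G/H)<\tau(G)$, and likewise for $G'_0$. Since $\varphi$ is surjective with $\tau(G)=\tau(G')$, I want to transport subgroups along $\varphi$ in both directions and compare the defining minimality properties. First I would set $N=\ker\varphi$ and consider $H:=\varphi^{-1}(G'_0)$, a $\delta$-subgroup of $G$ containing $N$. Then $G/H\cong G'/G'_0$ by the isomorphism theorem for LDAGs, so $\tau(G/H)=\tau(G'/G'_0)<\tau(G')=\tau(G)$. By minimality of $G_0$ this gives $G_0\subseteq H=\varphi^{-1}(G'_0)$, hence $\varphi(G_0)\subseteq G'_0$.

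For the reverse inclusion I would push $G_0$ forward: $\varphi(G_0)$ is a $\delta$-subgroup of $G'$ (images of $\delta$-polynomial homomorphisms are Kolchin closed, as recalled in the excerpt after the definition of representations). I claim $\tau(G'/\varphi(G_0))<\tau(G')$. Indeed $\varphi$ induces a surjection $G/G_0\twoheadrightarrow G'/\varphi(G_0)$, and since the image of an LDAG under a surjective $\delta$-homomorphism has differential type bounded by that of the source — this is immediate from Lemma~\ref{lem:inequtype} applied to the kernel of that surjection, or directly since $\Quot$ of the coordinate ring of the image embeds into that of the source — we get $\tau\big(G'/\varphi(G_0)\big)\le\tau(G/G_0)<\tau(G)=\tau(G')$, the strict inequality being the defining property of $G_0$. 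Then minimality of $G'_0$ yields $G'_0\subseteq\varphi(G_0)$. Combining the two inclusions gives $\varphi(G_0)=G'_0$.

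The main obstacle, and the point I would be most careful about, is the monotonicity statement $\tau(\text{image})\le\tau(\text{source})$ for a surjective $\delta$-polynomial homomorphism, since all these $\tau$'s are defined through the \emph{identity component} and one must check that passing to identity components is compatible with the surjection (the index-finiteness bijection between components of $G$ and $\overline G$, and more relevantly that $\varphi(G^\circ)=G'^\circ$, needs to be invoked or the inequality phrased so as to avoid it). A clean way around this is to note that for any LDAG $L$ and normal $\delta$-subgroup $M$, Lemma~\ref{lem:inequtype} gives $\tau(L)=\max\{\tau(M),\tau(L/M)\}\ge\tau(L/M)$; applying this with $L=G/G_0$ and $M=\ker\big(G/G_0\to G'/\varphi(G_0)\big)$ delivers exactly $\tau(G'/\varphi(G_0))\le\tau(G/G_0)$ without any further input. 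The only other routine checks — that $\varphi^{-1}(G'_0)$ and $\varphi(G_0)$ are genuinely $\delta$-subgroups, and that $G/\varphi^{-1}(G'_0)\cong G'/G'_0$ — are standard and I would dispatch them in a sentence.
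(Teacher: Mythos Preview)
Your proof is correct and follows essentially the same approach as the paper. The paper obtains $\varphi(G_0)\subset G'_0$ by citing \cite[Remark~2.7.4]{CassSingerJordan} rather than arguing via $H=\varphi^{-1}(G'_0)$ as you do, and for the reverse inclusion it writes the key inequality as $\tau(G'/\varphi(G_0))=\tau(G/G_0\ker\varphi)\le\tau(G/G_0)<\tau(G)=\tau(G')$, which is exactly your surjection $G/G_0\twoheadrightarrow G'/\varphi(G_0)$ combined with Lemma~\ref{lem:inequtype}.
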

\begin{proof}By \cite[Remark~2.7.4]{CassSingerJordan}, $\varphi(G_0) \subset G'_0$. On the other hand,  using Lemma~\ref{lem:inequtype},
$$\tau(G'/\varphi(G_0)) = \tau (G/G_0\ker\varphi)\leq \tau(G/G_0) < \tau(G)=\tau(G'),$$
which, by definition, implies that $\varphi(G_0) \supset G'_0$.
\end{proof}

\subsection{Computation of the diagonal}\label{sec:compdiag}
Here, we will explain how to find $G = \Galdelta(\cV^{\diag})$. Without loss of generality, in this section, let us assume that 
\begin{equation}\label{eq:isomdiag}
\cV\cong\cV^{\diag},
\end{equation}
 so we need to compute $G$. In this case, $G$ is reductive, so  \begin{equation}\label{eq:SZ}G^\circ =S\cdot Z,\end{equation} an almost direct product of its center $Z$ and a semisimple $\delta$-subgroup $S\subset\Sl(V)$. Due to the restriction $\dim\cV=3$, one can see that $S$ has to be simple, so it is $\delta$-isomorphic to a simple algebraic group or its constant points.

\begin{proposition}\label{prop:commutative}
Let $G$ be a DFGG such that $G^\circ$ is commutative. Then $\tau(G)=0$.
\end{proposition}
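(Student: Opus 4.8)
The plan is to use the finitely generated Kolchin-dense subgroup directly, reduce by passing to Zariski closures to the case of a torus times a vector group, and then apply the elementary observation that every element of the differentially closed base field $\k$ is a solution of a first-order linear differential equation over $\k$.

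First I would reduce to $G=G^\circ$. Since $\tau(G)=\tau(G^\circ)$ by definition, it suffices to treat $G^\circ$, which is again a DFGG: if $\Gamma\le G(\k)$ is finitely generated and Kolchin-dense, then $\Gamma_0:=\Gamma\cap G^\circ(\k)$ has finite index in $\Gamma$, hence is finitely generated, and its Kolchin closure is of finite index in $\overline{\Gamma}=G$ and therefore contains $G^\circ$; since it also lies inside the Kolchin-closed set $G^\circ$ we get $\overline{\Gamma_0}=G^\circ$. So we may assume $G$ is connected, hence commutative by hypothesis, and $G=\overline{\Gamma}$ for a finitely generated (necessarily abelian) $\Gamma\le G(\k)\subseteq\GL_n(\k)$.

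Now pass to the Zariski closure: $\overline{G}\subseteq\GL_n$ is a connected commutative linear algebraic group, so in characteristic $0$ it is $T\times U$ with $T$ a torus and $U\cong\Ga^b$. With $\pi_T,\pi_U$ the projections one has $G\subseteq\pi_T(G)\times\pi_U(G)$, where $\pi_T(G),\pi_U(G)$ are Kolchin-closed subgroups (images of $\delta$-homomorphisms) in which $\pi_T(\Gamma),\pi_U(\Gamma)$ are Kolchin-dense; it thus suffices to bound $\dim\pi_T(G)$ and $\dim\pi_U(G)$. For the unipotent part, each coordinate of each generator $\pi_U(\gamma_i)\in\k^b$ lies in $\k$, hence is annihilated by a monic first-order operator in $\k[\delta]$ (take $\delta-\delta(c)/c$ for a nonzero entry $c$, and $\delta$ if $c=0$); a least common left multiple $\mathcal{L}\in\k[\delta]\setminus\{0\}$ of this finite collection then annihilates every $\Z$-linear combination of the generators coordinatewise, so $\pi_U(\Gamma)$, and therefore its closure $\pi_U(G)$, is contained in $\{w\in\Ga^b:\mathcal{L}(w_l)=0,\ l=1,\dots,b\}$, a Kolchin-closed subgroup of dimension $\le b\cdot\ord\mathcal{L}<\infty$. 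For the torus part, compose first with the logarithmic derivative $\delta\log:\Gm\to\Ga$ (a surjective $\delta$-homomorphism with $\delta\log(y^n)=n\,\delta\log y$): the entries $\delta(\tau_{i,l})/\tau_{i,l}$ of $\delta\log$ applied to the generators again lie in $\k$, are killed by a common $\mathcal{M}\in\k[\delta]\setminus\{0\}$, and hence $\pi_T(G)\subseteq\{y\in\Gm^a:\mathcal{M}(\delta(y_l)/y_l)=0\}$, which is again finite-dimensional. Therefore $\dim G\le\dim\pi_T(G)+\dim\pi_U(G)<\infty$, i.e. $\tau(G)=0$.

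The delicate input is not the reduction to $G^\circ$, which is routine, but the fact that the finitely generated dense subgroup may be taken inside $G(\k)$ with $\k$ \emph{differentially closed}: the whole argument rests on the generators having entries in the field of definition, and the statement genuinely fails over a non-differentially-closed base — for instance $\Gm$ is then a DFGG of differential type $1$. This is precisely where the standing assumption that the base field is differentially closed is used. The remaining ingredients — the structure theorem $\overline{G}=T\times U$, Kolchin-closedness of images under $\pi_T,\pi_U,\delta\log$, existence of least common left multiples in $\k[\delta]$, and the dimension of the solution set of a scalar linear ODE — are all standard.
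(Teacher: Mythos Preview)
Your proof is correct and follows the same architecture as the paper's: pass to $G^\circ$ (the paper cites \cite[Proposition~2.11]{MinOvSing} for this step, while you argue it directly via the finite-index intersection $\Gamma\cap G^\circ$), decompose the Zariski closure of $G^\circ$ as torus $\times$ commutative unipotent, and handle the two projections separately. Where the paper simply invokes \cite[Lemmas~2.12 and~2.13]{MinOvSing} to conclude $\tau(G_s)=\tau(G_u)=0$, you supply a self-contained argument via least common left multiples of first-order operators in $\k[\delta]$ (combined, for the torus, with the logarithmic derivative $\Gm\to\Ga$). This is essentially the content of those cited lemmas, so what you gain is independence from the external reference rather than a genuinely different idea; the paper's version is shorter, yours is more transparent about why the DFGG hypothesis and the differentially closed base are actually doing the work.
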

\begin{proof}
By \cite[Proposition~2.11]{MinOvSing}, $G^\circ$ is a DFGG. Let $F\subset\GL(V)$ denote the Zariski closure of $G^\circ$. By~\cite[Theorem 15.5]{Humphline}, $F$ is a direct product of an algebraic torus $F_s$ and a commutative unipotent group $F_u$. Let $G_s$ and $G_u$ stand for the projections of $G^\circ$ to $F_s$ and $F_u$, respectively. Both $G_s$ and $G_u$ are DFGG as  homomorphic images of a DFGG. It follows from \cite[Lemmas 2.12 and 2.13]{MinOvSing} that $\tau(G_s)=\tau(G_u)=0$. Since $G^\circ\subset G_s\times G_u$, $\tau(G^\circ)=0$. Hence, $\tau(G)=0$.
\end{proof}

\begin{proposition}\label{prop:semisimple}
Suppose that $G^\circ$ is non-commutative.
\begin{enumerate}[leftmargin=0.5cm,itemindent=0cm,labelsep=0.55cm,itemsep=0.1cm,align=parleft]
\item If $\cV$ is simple, then $G$ is determined by $(\GL(V),V\otimes V^*)$ and $(\GL(V),\wedge^{3}V)$.
\item If $\cV$ is not simple, then $\cV=\cU\oplus\cW$, where $\dim\cU=1$ and $\cW$ is simple, and $G$ is determined by $(H,W\otimes W^*)$ and $(H,\wedge^2W\oplus U)$, where $W = \omega(\cW)$, $U = \omega(\cU)$, and $H=\GL(U)\times\GL(W)\subset\GL(V)$.
\end{enumerate}
\end{proposition}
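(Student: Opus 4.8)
The plan is to reduce both parts to the abstract mechanism set up in Definition~\ref{def:determined}: to show $G$ is determined by a given collection $(H_i,V_i)$, it suffices to show that any element of $H$ (here $H=\GL(V)$ or $H=\GL(U)\times\GL(W)$) whose image in each $\GL(V_i)$ lies in $G_i=\varrho_i(G)$ must already lie in $\overline G$, at least up to finite index; equivalently, that the connected component of $P=\bigcap_i\varrho_i^{-1}(G_i)$ coincides with $G^\circ$. Since $G$ is reductive with $G^\circ=S\cdot Z$ an almost direct product of a simple $\delta$-group $S\subset\SL(V)$ and its center $Z$ (as recorded in \eqref{eq:SZ}), the task is to recover $S$ and $Z$ separately from the modules listed. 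The point is that the adjoint-type module $W\otimes W^*$ (or $V\otimes V^*$) kills the center and faithfully sees $S/(S\cap Z)$, while the one-dimensional pieces coming from top exterior powers see exactly the central characters that $V\otimes V^*$ forgets; together they pin down $G^\circ$ inside $\overline G$.

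For part (1), where $\cV$ is simple and $\overline G$ acts irreducibly on the $3$-dimensional $V$: first I would observe that $V\otimes V^*\cong\End(V)\supset\mathfrak{sl}(V)$, so $\varrho_1(G)$ controls the image of $G$ in $\PGL(V)$, hence recovers $S$ modulo finite data; a connected subgroup of $\GL_3$ acting irreducibly with prescribed image in $\PGL_3$ differs from $\overline G$ only by the central torus $\Gm\cdot I$. Then $\wedge^3 V$ is the one-dimensional determinant module, and $\varrho_2(G)=\det(G)$ pins down the image of the center $Z$ under $\det$. Combining, any element of $\GL(V)$ whose image agrees with $G$ on both $\End(V)$ and $\wedge^3V$ lies in $G^\circ$ up to the intersection $S\cap Z$, which is finite; this gives $G^\circ=P^\circ$, which is exactly "determined by" those two pairs. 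For part (2), where $\cV=\cU\oplus\cW$ is a nontrivial direct sum forced by $\dim\cV=3$ and non-commutativity of $G^\circ$ (so $\cW$ must be the simple $2$-dimensional block), I would run the same argument inside $H=\GL(U)\times\GL(W)$: the module $W\otimes W^*$ recovers the projection of $G$ to $\PGL(W)$, hence its semisimple part $S\subset\SL(W)$; the module $\wedge^2W\oplus U=\det(W)\oplus U$ is two-dimensional with $G$ acting by a pair of characters, and its image recovers the central torus part, i.e. the characters by which $Z$ acts on $\det W$ and on $U$. Since $S\cap Z$ is again finite, $G^\circ=P^\circ$.

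The main obstacle I expect is verifying that $W\otimes W^*$ (respectively $V\otimes V^*$) really does recover $S$ up to finite index and not just its Zariski closure — i.e. that one does not lose $\delta$-algebraic information when passing from $G$ to $\varrho_1(G)$. Here is where the hypothesis that $G$ is a DFGG and reductive is used: by Theorem~\ref{thm:decompalmostdirectprodreductive}, $G^\circ$ decomposes as an almost direct product $S\cdot Z$ with $S$ simple, and a simple LDAG is determined (even as a $\delta$-group, up to conjugacy and finite index) by its adjoint representation, because $S\to\GL(\mathrm{Lie}\,S)$ has finite kernel and the adjoint module is a subquotient of $\mathrm{End}(V)$. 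The companion subtlety is checking $S\cap Z$ is finite, which is immediate from $S$ being quasi-simple (finite center) together with $Z\cap S\subset Z(S)$. Granting these, the "determined by" conclusion is then a formal consequence of Definition~\ref{def:determined} and Proposition~\ref{prop:index} for intersecting the two finite-index conditions. I would also need to dispatch the elementary structural claim in part (2) that non-commutativity of $G^\circ$ in dimension $3$ forces the decomposition $\cV=\cU\oplus\cW$ with $\dim\cW=2$ simple: a non-commutative connected reductive subgroup of $\GL_3$ has semisimple part of rank $\ge 1$ acting nontrivially, and the only faithful such action in dimension $\le 3$ with a complement is $\SL_2$ on a $2$-dimensional summand, the remaining line being $\GL_1$.
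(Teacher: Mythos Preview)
Your strategy matches the paper's: pull back along $\varrho_1$ (adjoint-type, kernel the scalars $E$) and $\varrho_2$ (determinant-type, kernel $\SL$), then show $(G\cdot E)\cap(G\cdot\SL(V))$ contains $G^\circ$ with finite index using $G^\circ=S\cdot Z$ with $S\subset\SL(V)$ and $Z\subset E$. Two points deserve correction.

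First, you skip the one place where the non-commutativity hypothesis is actually used: you must check that $V$ (respectively $W$ in part~(2)) is simple as a \emph{$G^\circ$-module}, not merely as a $G$-module, since the inclusion $Z\subset E$ comes from Schur's lemma applied to $G^\circ$. The paper argues this directly: a $G^\circ$-irreducible submodule has dimension $\ge 2$ by non-commutativity, and if it were $2$-dimensional it would be the unique such, hence $G$-stable, contradicting simplicity of $V$. Without this step your sentence ``a connected subgroup of $\GL_3$ acting irreducibly'' is unjustified for $G^\circ$.

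Second, your ``main obstacle'' is not one. By Definition~\ref{def:determined} the groups $G_i$ are \emph{defined} as $\varrho_i(G)$, so no $\delta$-algebraic information can be lost in passing to them; the only content is the group-theoretic element chase showing $(S\cdot E)\cap(Z\cdot\SL(V))=G^\circ\cdot\Lambda$ with $\Lambda=E\cap\SL(V)$ finite (this, rather than $S\cap Z$, is the relevant finite group). The paper does this explicitly: writing $\gamma=se=za$ with $s\in S$, $e\in E$, $z\in Z$, $a\in\SL(V)$, one gets $z^{-1}e=s^{-1}a\in E\cap\SL(V)=\Lambda$, hence $\gamma\in G^\circ\cdot\Lambda$; then Proposition~\ref{prop:index} passes finite index from $G^\circ\subset\Gamma_1$ up to $G\subset\Gamma$.
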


\begin{proof}
By \cite[Proposition 2.23]{HMO}, $V=\o(\cV)$ is semisimple as a $G^\circ$-module.	
Suppose that $V$ is simple. Then $V$ is simple as a $G^\circ$-module. Indeed, since $G^\circ$ is non-commutative, $V$ has an irreducible $G^\circ$-submodule $W$ of dimension $\geq 2$. If $\dim W=2$, then, due to $\dim V=3$, $W$ is the only irreducible submodule of $V$ of dimension~2. Since $G^\circ$ is normal in $G$, it follows that $W$ is a $G$-submodule of $V$, which is impossible. Hence, $\dim W=3$ and $V$ is simple as a $G^\circ$-module. 
Set $$H=H_1=H_2=\Gl(V),\ \ V_1:=V\otimes V^*\simeq\End(V),\ \ V_2:=\wedge^{3}V.$$ 
Recall the notation from Section~\ref{sec:prep}: $\varrho_i := \overline{G} \to \GL(V_i)$, $i=1,2$, where the Zariski closure is taken in $\GL(V)$.
We have $\Ker\varrho_1\subset H$ equal the group $E$ of scalar multiplications (by Schur's lemma) and  $\Ker\varrho_2=\Sl(V)$.
It is sufficient to show (see Definition~\ref{def:determined}) that $G$ has finite index in $$\Gamma:=(G\cdot E)\cap (G\cdot\SL(V))\subset\GL(V).$$ By Proposition~\ref{prop:index}, $\Gamma$ contains the finite index subgroup $$\Gamma_1:=(G^\circ\cdot E)\cap (G^\circ\cdot\SL(V))=(S\cdot E)\cap (Z\cdot\SL(V)).$$
(Here we used~\eqref{eq:SZ}, $S\subset\SL(V)$,  because $S=[S,S]$, and $Z\subset E$ by Schur's lemma, because $V$ is a simple $G^\circ$-module, as shown above.) Setting $\Lambda=\SL(V)\cap E$, which is finite, we obtain $G^\circ\subset \Gamma_1= G^\circ\cdot\Lambda$. Indeed, the inclusion $ \Gamma_1\supset G^\circ\cdot\Lambda$ is straightforward. 
For the reverse inclusion, for each $\gamma\in \Gamma_1$, let $s \in S$, $e\in E$, $z\in Z$, and $a \in \SL(V)$ be such that $\gamma=se=za$. Since $z$ is a scalar matrix, $z^{-1}e=s^{-1}a\in \SL(V)$, and so there exists $\lambda \in\Lambda$ such that $e=z\lambda$. Therefore, $\gamma =s(z\lambda)=(sz)\lambda \in G^\circ\cdot\Lambda$.
Hence, $G^\circ$ has finite index in $\Gamma_1$, and therefore in $\Gamma$. Hence, $G$ has finite index in $\Gamma$.

If $\cV$ is not simple, then there exist $\cU$ and $\cW$ such that $\dim\cU=1$, $\dim\cW =2$, and $\cV=\cU\oplus\cW$. 
Let $\varrho_1 : H \to \GL(W\otimes W^*)$ and $\varrho_2 : H \to \GL(\wedge^2W\oplus U)$, where
$$H=H_1=H_2=\Gl(U)\times\Gl(W)\subset\Gl(V).$$ $\Ker\varrho_1$ equals the group  of transformations acting by scalar multiplications on $U$ and $W$, and $\Ker\varrho_2=\Sl(W)$. Since $S=[S,S]$, $S \subset \SL(W)$. By Schur's lemma, $Z \subset \Ker\varrho_1$. Therefore, 
$$\Gamma_1' := (G^\circ\cdot \ker\varrho_1)\cap (G^\circ\cdot\ker\varrho_2)=(S\cdot\ker\varrho_1)\cap (Z\cdot\SL(W)),$$
which, by Proposition~\ref{prop:index}, has finite index in $$\Gamma' := (G\cdot \ker\varrho_1)\cap (G\cdot\ker\varrho_2)\subset\GL(U)\times\GL(W).$$
Let $\Lambda' =\ker\varrho_1\cap \SL(W)$, which is finite.
We have $G^\circ\cdot\Lambda' \subset \Gamma_1'$.  For each $\gamma\in \Gamma_1'$, let $s \in S$, $e\in \ker\varrho_1$, $z\in Z$, and $a \in \SL(W)$ be such that $\gamma=se=za$. Since $z$ and $s$ commute, $z^{-1}e=s^{-1}a\in \SL(W)$, and so there exists $\lambda \in\Lambda'$ such that $e=z\lambda$. Therefore, $\gamma =s(z\lambda)=(sz)\lambda \in G^\circ\cdot\Lambda'$, and so $\Gamma_1'\subset G^\circ\cdot\Lambda'$.
Hence, $G^\circ$ has finite index in $\Gamma_1'$, and therefore in $\Gamma'$. Hence, $G$ has finite index in $\Gamma'$.
\end{proof}

\begin{remark}\label{rem:semisimple}
In the notation of Proposition~\ref{prop:semisimple}, the groups $\Galdelta(\wedge^3\cV)$ and $\Galdelta(\wedge^2\cW\oplus\cU)$ can be computed since their differential type is $0$ by Proposition~\ref{prop:commutative}. Moreover,  $\Galdelta(\cV\otimes\cV^*)$ and $\Galdelta(\cW\otimes\cW^*)$ are quasi-simple, so can be computed using \cite[Algorithm~5.3.2]{MinOvSing}.  
\end{remark}
\subsection{Decomposable case}\label{sec:decompcase}
Let us consider the situation in which $\cV$ is a direct sum of its nontrivial submodules $\cU$ and $\cW$, and $\dim\cU=1$. (One of them has to be of dimension 1.) The case of semisimple $\cV$ (or, equivalently, $\cW$) is treated by Propositions~\ref{prop:commutative} and~\ref{prop:semisimple}. So, we are interested in what happens if $\cW$ is not a direct sum.

We will use the following result for differential modules of dimension $2$ (see \cite[Theorem~2.13]{MinOvSingunip}, \cite[Proposition~3.21]{HMO}, and \cite{ArrecheJSC}).

\begin{proposition}\label{prop:2solvable}
Let $G'$ be an LDAG and $W$ a 2-dimensional faithful $G'$-module such that $W^{\diag}=U_1\oplus U_2$, $\dim U_i=1$. Then one of the following holds:
\begin{enumerate}[leftmargin=0.95cm,itemindent=0cm,labelsep=0.95cm,itemsep=-0.35cm,align=parleft]
 \item[(CQ)] $U_1\otimes U_2^*$ is constant and $\tau(G')$=0;\\
 \item[(CR)] $U_1\otimes U_2^*$ is non-constant and $W$ is semisimple;\\
 \item[(NC)] $U_1\otimes U_2^*$ is non-constant and $\Ru(G')\simeq\Ga$.\\
\end{enumerate}
\end{proposition}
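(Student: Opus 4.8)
The plan is to translate the three-part dichotomy into a statement about the structure of the reductive quotient $G'/\Ru(G')$ acting on $W^{\diag}=U_1\oplus U_2$ and the way $W$ sits as an extension of one line by another. First I would pass to $\overline{G'}\subset\GL(W)$, the Zariski closure, and observe that since $W$ is a faithful $2$-dimensional module with $W^{\diag}=U_1\oplus U_2$, the algebraic group $\overline{G'}$ is (after conjugation) contained in the Borel subgroup $B$ of upper-triangular matrices of $\GL_2$; its reductive quotient is a subtorus of the diagonal torus $T$, and the unipotent radical $\Ru(\overline{G'})$ is either trivial (the semisimple case, when $W$ as a $\overline{G'}$-module, equivalently as a $G'$-module, is semisimple) or all of the one-dimensional unipotent group $\Ga$. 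The key invariant is the character $\chi:=$ the action of $G'$ on the one-dimensional module $U_1\otimes U_2^*=\underline{\Hom}(U_2,U_1)$, equivalently on $\wedge$ of the two diagonal characters' difference. Being ``constant'' for this rank-one module means exactly that $\Galdelta$ of it is a constant LDAG in the sense of the relevant definition.

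The main case split would then be: (a) If $W$ is semisimple, we are in (CR) if $U_1\otimes U_2^*$ is non-constant; if instead $U_1\otimes U_2^*$ is constant, I claim $\tau(G')=0$, which lands us in (CQ). For this claim, note that when $W=U_1\oplus U_2$ is a genuine direct sum with the ratio character constant, $G'$ embeds into $U_1\oplus U_2$ as a subgroup of $\GL_1\times\GL_1$ all of whose ``off-diagonal ratio'' is a constant character, so $G'$ is built from copies of $\Gm$ and constant-type data; one invokes the quantitative statements behind Lemmas~2.12 and~2.13 of \cite{MinOvSing} (exactly as in the proof of Proposition~\ref{prop:commutative}) to conclude $\tau(G')=0$. (b) If $W$ is not semisimple, then $\Ru(\overline{G'})=\Ga$, so $\Ru(G')$ is a $\delta$-closed subgroup of $\Ga$; I must rule out $\Ru(G')=\{1\}$ (which would force $G'$ reductive, hence $W$ semisimple by \cite[Exercise~2.38]{vdPutSingerDifferential}, contradiction) and rule out $\Ru(G')\subsetneq\Ga$ being a proper nontrivial $\delta$-subgroup of $\Ga$. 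The latter is where the non-constancy of $U_1\otimes U_2^*$ enters decisively: the torus part of $G'$ acts on $\Lie\Ru(G')\subset\Ga$ precisely through the character $\chi=U_1\otimes U_2^*$, so a proper nonzero $\delta$-submodule of $\Ga$ stable under this action would exhibit $\chi$ as constant, contradicting the hypothesis. Hence $\Ru(G')=\Ga$ in the non-semisimple, non-constant case, giving (NC). Finally one checks the three cases are exhaustive: if $U_1\otimes U_2^*$ is constant we get (CQ) regardless of semisimplicity (if $W$ happened to be non-semisimple with constant ratio, the previous paragraph's argument shows $\Ru(G')$ would be a proper $\delta$-subgroup, impossible, so in fact $W$ is semisimple and we are in (CQ)); if it is non-constant we get (CR) or (NC) according as $W$ is semisimple or not.

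The hard part will be the rigidity step in case (b): showing that non-constancy of the one-dimensional module $U_1\otimes U_2^*$ forces the unipotent radical to be the full $\Ga$ rather than a proper infinite-dimensional $\delta$-subgroup of it. This requires knowing the classification of $\delta$-subgroups of $\Ga$ (they correspond to left ideals of $\k[\delta]$, i.e.\ are cut out by a single linear $\delta$-polynomial up to the trivial ideal) together with the observation that the conjugation action of the diagonal torus on $\Ru(G')\subset\Ga$ is by the character $\chi$; a proper nonzero such subgroup would be a proper nonzero $\k[\delta]$-submodule of the rank-one $\chi$-eigenspace, and the existence of such a submodule is exactly equivalent to $\Galdelta(U_1\otimes U_2^*)$ being a proper (hence, by rank one, constant) $\delta$-subgroup of $\Gm$. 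This is precisely the content extracted from \cite[Theorem~2.13]{MinOvSingunip} and \cite[Proposition~3.21]{HMO}, so in the write-up I would mostly reduce to citing those, having set up the Borel-triangular normal form and the identification of the relevant character. The remaining verifications — exhaustiveness, the $\tau=0$ computation in the constant case, and the elementary fact that a reductive $G'$ forces $W$ semisimple — are routine given the machinery already recalled in the excerpt.
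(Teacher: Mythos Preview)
The paper does not give its own proof of this proposition; it is quoted as a known result, with pointers to \cite[Theorem~2.13]{MinOvSingunip}, \cite[Proposition~3.21]{HMO}, and \cite{ArrecheJSC}. Your sketch correctly reconstructs the architecture behind those references: put $G'$ inside the Borel of $\GL_2$, observe that the reductive quotient sits in the diagonal torus, and study $\Ru(G')$ as a $G'$-invariant $\delta$-subgroup of $\Ga$ on which the torus acts through the ratio character $\chi$ corresponding to $U_1\otimes U_2^*$. The (NC) step --- non-constancy of $\chi$ forces any $G'$-invariant $\delta$-subgroup of $\Ga$ to be a $\k$-subspace, hence $0$ or all of $\Ga$ --- is precisely \cite[Proposition~3.21]{HMO}, and your reduction to it is sound.

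Your handling of (CQ), however, has two genuine problems. First, the conclusion $\tau(G')=0$ is false for an arbitrary LDAG: take $G'=\Gm$ acting by scalars on $\k^2$, so that $U_1\otimes U_2^*$ is trivial (hence constant) while $\tau(G')=1$. The cited sources all work under a DFGG hypothesis (they concern parameterized Galois groups), and you lean on it implicitly when you invoke Proposition~\ref{prop:commutative}; this must be made explicit. Second, your exhaustiveness paragraph is mistaken: you claim that ``non-semisimple $W$ with constant ratio'' would force $\Ru(G')$ to be a \emph{proper} $\delta$-subgroup of $\Ga$ and that this is ``impossible, so in fact $W$ is semisimple''. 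Neither assertion is correct --- proper nonzero $\delta$-subgroups of $\Ga$ exist and are perfectly capable of being torus-invariant exactly when $\chi$ is constant, and nothing rules out $\Ru(G')=\Ga$ either. The case ``$\chi$ constant, $W$ not semisimple'' is not vacuous; what is needed there is the content of \cite[Theorem~2.13]{MinOvSingunip} (under DFGG, constant reductive quotient on $W^{\diag}$ forces $\tau(G')=0$), not an attempt to exclude it.
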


\begin{proposition}\label{prop:decomposable}
Suppose $\cV=\cU\oplus\cW$, $\dim\cU=1$, $\dim\cW=2$, and $\cW$ is not semisimple. Let $\cW^{\diag}=\cW_1\oplus\cW_2$.
\begin{enumerate}[leftmargin=0.1cm,itemindent=0cm,labelsep=.15cm,itemsep=-0.3cm,align=left]
 \item If $\Galdelta(\cW_1\otimes\cW_2^*)$ is constant, then $\tau(G)=0$.\\
 \item Otherwise, $G$ is determined by $V^{\diag}$.
\end{enumerate}
\end{proposition}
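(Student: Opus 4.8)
The plan is to split into the two cases according to whether $\Galdelta(\cW_1\otimes\cW_2^*)$ is constant, and in each case to reduce to statements we already have. Write $G = \Galdelta(\cV)$, $G' = \Galdelta(\cW)$, and recall that restriction to the tensor subcategory $\{\cW\}^{\otimes,\delta}$ of $\{\cV\}^{\otimes,\delta}$ induces a surjective $\delta$-homomorphism $G \to G'$; similarly restriction to $\{\cU\}^{\otimes,\delta}$ gives a surjection $G \to \Galdelta(\cU)$, and the pair of these realizes $G$ as a $\delta$-subgroup of $\Galdelta(\cU) \times G'$. Since $K=\k(x)$, $G$ is a DFGG. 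In case (1), apply Proposition~\ref{prop:2solvable} to $G'$ and $W$: since $\cW_1\otimes\cW_2^*$ is constant we are in case (CQ), so $\tau(G')=0$. Also $\Galdelta(\cU)$ is $1$-dimensional, hence (as noted in Section~\ref{sec:prep}) $\tau(\Galdelta(\cU))=0$. Then $G \subset \Galdelta(\cU)\times G'$, and by Lemma~\ref{lem:inequtype} applied to this ambient product (whose differential type is $\max\{\tau(\Galdelta(\cU)),\tau(G')\}=0$), we get $\tau(G)=0$. This proves part (1).

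For part (2) we must show that $G$ is determined by $V^{\diag}$, i.e.\ that $G = P := \varrho^{-1}(\Galdelta(\cV^{\diag}))$, where $\varrho\colon \overline G \to \GL(V^{\diag})$ is the map on associated graded modules (equivalently $H_i = \overline G$ and $V_i$ ranges over the composition factors). Since $\cW$ is not semisimple and $\cW_1\otimes\cW_2^*$ is non-constant, Proposition~\ref{prop:2solvable} puts us in case (NC): $\Ru(G')\simeq\Ga$, and in particular $G'$ is not reductive, so $\cW$ is genuinely non-split. The key point is to control $\Ru(G)$ and compare it with $\Ru(\overline G)$ and with the kernel of $\varrho$. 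Concretely, I would show that the kernel $N$ of $G \to \Galdelta(\cV^{\diag})$ is unipotent (it acts trivially on the graded pieces, so it is conjugate into the unipotent upper-triangular group with respect to a basis adapted to the filtration), hence $N \subset \Ru(G)$; conversely, since the composition factors of $\cV$ are simple modules, $\Ru(G)$ acts trivially on each of them, so $\Ru(G) \subset N$, giving $N = \Ru(G)$. Thus it suffices to show $\Ru(G)$ is as large as possible, namely that it equals $\Ru(P) = \ker\varrho \cap \overline G$; equivalently, that $\dim \Ru(G)$ — as a $\delta$-group, via the behaviour of its "differential dimension" along the descending series from the unipotent definition — matches that forced by the extension data.

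The mechanism for the lower bound on $\Ru(G)$ is that $\cV = \cU \oplus \cW$ with $\cW$ a non-split extension of $\cW_1$ by $\cW_2$ (say), so $\cV$ already carries one nontrivial extension class, and the category $\{\cV\}^{\otimes,\delta}$ "sees" exactly this extension and its prolongations. Using Theorem~\ref{thm:difftanequ} to pass to representations, $P/G = P/(\overline G \cap P)$ injects into $\Galdelta(\cV^{\diag})$-equivariant data measuring how far $\varrho(G)$ is from all of $\Galdelta(\cV^{\diag})$ acting on the relevant $\Ext$; here because $\cU$ and $\cW$ contribute independently and the only nontrivial gluing is inside $\cW$, which is already present, one checks that no further constraint beyond $\varrho^{-1}(\Galdelta(\cV^{\diag}))$ survives, i.e.\ $G = P$. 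I expect the main obstacle to be precisely this last verification: ruling out a "hidden" relation between the $\cU$-part and the extension inside $\cW$ that would make $G$ a proper subgroup of $P$. The cleanest way to dispatch it is to note that such a relation would produce a nontrivial sub- or quotient module of $\cV$ not already accounted for by $\cU$, $\cW_1$, $\cW_2$ and the given extension — but $\dim\cV = 3$ with $\dim\cU=1$ leaves no room for one (any nontrivial extension involving $\cU$ would have to live in a $\p$-submodule of $\cV$ of dimension $\le 2$, and the only $2$-dimensional non-split one is $\cW$, which does not involve $\cU$ since $\cV=\cU\oplus\cW$ as $\p$-modules). Hence $G=P$, as claimed.
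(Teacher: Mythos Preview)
Your argument for part (1) is fine and matches the paper's: Proposition~\ref{prop:2solvable} gives $\tau(G')=0$, the one-dimensional $\Galdelta(\cU)$ has type $0$, and $G$ sits inside the product, so Lemma~\ref{lem:inequtype} finishes.

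Part (2), however, has a genuine gap. You correctly reduce the claim ``$G$ is determined by $V^{\diag}$'' to showing that $G$ contains the full kernel $N\cong\Ga$ of the action of the ambient group on $V^{\diag}$ (equivalently, that $\Ru(G)=\ker\varrho\cap\overline{G}$). But your justification for this containment is not an argument. The paragraph about $P/G$ ``injecting into $\Galdelta(\cV^{\diag})$-equivariant data'' is a slogan, not a computation, and the final dimension count is actually wrong: a hypothetical relation cutting $G$ down inside $P$ would correspond, Tannakian-wise, to a $G$-stable subobject \emph{somewhere in $\{\cV\}^{\otimes,\delta}$}, not necessarily to a sub- or quotient of $\cV$ itself. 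Such an object could perfectly well live in a prolongation $D^k(\cV)$ or a tensor construction, where your ``$\dim\cV=3$ leaves no room'' observation has no force. So the argument as written does not rule anything out. Note also that you never use the NC hypothesis beyond recording $\Ru(G')\cong\Ga$; yet this hypothesis is exactly what distinguishes part (2) from part (1), so it must enter the proof of $N\subset G$ in an essential way.

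The paper's proof supplies the missing mechanism: differential type and the \emph{strong identity component}. Concretely, with $\varphi\colon\GL(U)\times\GL(W)\to\GL(W)$ the projection and $G_W=\varphi(G)$, the NC hypothesis gives $\tau(G_W)=1$ and $G_W\supset N$; since $G_W/N$ is diagonal DFGG, $\tau(G_W/N)=0$, so $N=(G_W)_0$. By Lemma~\ref{lem:strongidepi}, $\varphi(G_0)=N$. On the other side, the image of $G_0$ in $\GL(U)$ has type $0$, and since $G_0$ is strongly connected of type $1$, that image must be trivial. Hence $G_0\subset\{1\}\times\GL(W)$, and combining with $\varphi(G_0)=N$ yields $G_0=N\subset G$. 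This is the step your proposal is missing; the Goursat-type intuition you gesture at can be made to work, but only once you bring in $G_0$ and the type inequality $\tau(\Galdelta(\cU))=0<1=\tau(G)$.
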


\begin{proof}
The first case follows from Proposition \ref{prop:2solvable} and Lemma~\ref{lem:inequtype}. For the second case, let $W_0\subset W$ be a $G$-invariant 1-dimensional subspace, and define $$H\subset\Gl(U)\times\Gl(W)\subset\Gl(V)$$ to be the group of transformations preserving $W_0$. We have $G\subset H$. We claim that $G$ is determined by $(H,V^{\diag})$. It suffices to show that $G$ contains the kernel $N  \cong \Ga$ of the action of $H$ on $V^{\diag}$. We have $$N\subset\Gl(W)\subset\Gl(V).$$
Let $$\varphi: \Gl(U)\times \Gl(W) \to \Gl(W)$$ be the projection homomorphosm and $G_W = \varphi(G)$. By the hypothesis and Proposition \ref{prop:2solvable}, $G_W\supset N$. Since $N \cong \Ga$, it is strongly connected~\cite[Example 2.7(1)]{CassSingerJordan}. Moreover, since $G_W/N$ is diagonal and is a DFGG, $$\tau(G_W/N) = 0 < \tau(G_W)=1.$$ Therefore, by definition, $N=(G_W)_0$. Hence, $N=\varphi(G_0)$. Since $G$ is a DFGG, its restriction to $\Gl(U)$ is of type zero and, therefore, the image of $G_0$ in $\Gl(U)$ is of type zero as well. Moreover,  
by \cite[Remark~2.7.5]{CassSingerJordan},
we conclude that $G_0$ acts trivially on $U$. Therefore, since $G_0\subset\Gl(U)\times N$, and $G_0$ acts trivially on $V^{\diag}$, 
we conclude $G_0\subset N$, hence $N=G_0\subset G$.
\end{proof}

\subsection{Indecomposable case with a 2-dimensional simple component}\label{sec:indeccase2dim}
\begin{proposition}\label{prop:indecomposable}
Suppose that $\cV$ is indecomposable and $\cV^{\diag}$ is the sum of two simple submodules $\cW_1$ and $\cW_2$, of dimension $1$ and $2$, respectively. Suppose additionally that $G^\circ$ is not commutative. If $\Galdelta(\cW_1^*\otimes\cW_2)$ is constant, then $\tau(G)=0$. If $\Galdelta(\cW_1^*\otimes\cW_2)$ is not constant, then $G$ is detemined by $\omega(\cV)^{\diag}$. 
\end{proposition}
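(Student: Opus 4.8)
The plan is to mimic the structure of the proof of Proposition~\ref{prop:decomposable}, adapting it to the indecomposable situation. As in that case, the key dichotomy comes from Proposition~\ref{prop:2solvable} applied to the $2$-dimensional subquotient sitting inside $\cV$ that involves $\cW_1$ and $\cW_2$; the point $\cW_1^*\otimes\cW_2$ being constant or not determines whether the unipotent radical is trivial (type zero) or isomorphic to $\Ga$.

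First I would set up the filtration. Since $\cV$ is indecomposable with $\cV^{\diag}=\cW_1\oplus\cW_2$, $\dim\cW_1=1$, $\dim\cW_2=2$, there is a nonzero proper $\p$-submodule realizing the nontrivial extension; writing things in terms of $V=\o(\cV)$, the group $G$ stabilizes a corresponding subspace, so $G\subset H$ where $H\subset\Gl(V)$ is the (algebraic) subgroup preserving that flag compatible with the decomposition of $V^{\diag}$. Concretely $H$ is a parabolic-type subgroup whose reductive quotient acts on $V^{\diag}=W_1\oplus W_2$, and the kernel $N$ of the action of $H$ on $V^{\diag}$ is a unipotent group; because the extension data lives in a single $2$-dimensional block (the $1$-dimensional piece $\cW_1$ interacting with the $2$-dimensional $\cW_2$, or vice versa), $N$ is one-dimensional, $N\cong\Ga$. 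The claim to prove is then exactly that, in the non-constant case, $G\supset N$, i.e.\ $G$ is determined by $(H,V^{\diag})$, hence by $V^{\diag}$ (using the Remark after Definition~\ref{def:determined}).

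For the constant case: if $\Galdelta(\cW_1^*\otimes\cW_2)$ is constant, then by Proposition~\ref{prop:2solvable}, case (CQ), applied to the relevant $2$-dimensional faithful module (the subquotient of $\cV$ built from $\cW_1$ and $\cW_2$), the Galois group of that subquotient has differential type $0$; combining with Lemma~\ref{lem:inequtype} and the fact that $G$ acts on the remaining $1$-dimensional subquotient through a group of type $0$ (since $G$ is a DFGG, as noted in Section~\ref{sec:prep}), one gets $\tau(G)=0$. Here one uses that $\Ru(G)$ maps into the unipotent radical of the relevant $2$-dimensional quotient's Galois group, which is trivial by (CQ), so $\tau(\Ru(G))=0$ and $\tau(G/\Ru(G))=0$ (the quotient is reductive, and reductive DFGGs over $\k(x)$ — by Proposition~\ref{prop:semisimple}/Remark~\ref{rem:semisimple} type reasoning, or directly since $G^\circ/\Ru(G^\circ)$ is an almost direct product of a torus of type $0$ and a simple group of type $0$).

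For the non-constant case I would argue as in Proposition~\ref{prop:decomposable}. Let $\varphi\colon H\to \Gl(W')$ be the projection onto the $2$-dimensional block $W'$ carrying the nontrivial extension between $W_1$ and $W_2$ (so $\ker\varphi$ acts trivially on $W'$ and $N\subset$ the preimage), and set $G_{W'}=\varphi(G)$. By hypothesis and Proposition~\ref{prop:2solvable}, case (NC), $\Ru(G_{W'})\cong\Ga$, and this unipotent radical is exactly the image of $N$; since $\Ga$ is strongly connected \cite[Example~2.7(1)]{CassSingerJordan} and $G_{W'}/\Ru(G_{W'})$ is diagonalizable hence of type $0$, we get $\Ru(G_{W'})=(G_{W'})_0$. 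By Lemma~\ref{lem:strongidepi} (noting $\tau(G)=\tau(G_{W'})=1$), $\varphi(G_0)=(G_{W'})_0=N$. Then, exactly as in Proposition~\ref{prop:decomposable}: $G_0$ acts through type $0$ on every $1$-dimensional subquotient, hence (by \cite[Remark~2.7.5]{CassSingerJordan}) trivially on $V^{\diag}$, so $G_0\subset N$; combined with $\varphi(G_0)=N$ and $G_0\subset\ker\varphi\cdot N$ this forces $G_0=N$, whence $N\subset G$ and $G$ is determined by $V^{\diag}$. The main obstacle I anticipate is bookkeeping the exact shape of $H$ and of $N$ when the $1$-dimensional factor can sit either above or below the $2$-dimensional simple $\cW_2$ in the composition series of the indecomposable $\cV$ — one must check that in every admissible arrangement $N$ is genuinely one-dimensional and coincides with the unipotent radical of the relevant $2$-dimensional subquotient — and verifying that $G_0$ really does act trivially on $U$-type pieces, which requires the DFGG hypothesis together with the $\tau$-additivity Lemma~\ref{lem:inequtype} exactly as in the decomposable proof.
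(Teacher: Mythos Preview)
Your approach has a structural misunderstanding that makes it fail. You treat the situation as parallel to Proposition~\ref{prop:decomposable}, looking for a ``$2$-dimensional subquotient $W'$ carrying the nontrivial extension between $W_1$ and $W_2$'' and invoking Proposition~\ref{prop:2solvable}. But no such subquotient exists here: $\cW_2$ is \emph{simple of dimension $2$}, so the only proper subquotients of $\cV$ are $\cW_1$ and $\cW_2$ themselves, and Proposition~\ref{prop:2solvable} (whose hypothesis is $W^{\diag}=U_1\oplus U_2$ with $\dim U_i=1$) is inapplicable to $\cW_1^*\otimes\cW_2$, which is simple. Consequently the kernel $N$ of the action of the parabolic $H$ on $V^{\diag}$ is $\Hom(W_1,W_2)$ (or its dual), hence \emph{two}-dimensional, not $\Ga$. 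Your strong-identity-component argument, which hinges on $N\cong\Ga$ and on $(G_{W'})_0\cong\Ga$, therefore does not get off the ground.

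The paper's proof proceeds quite differently. For the constant case it passes to the representation $\varphi:G\to\Gl(W_1^*\otimes V)$; constancy of $(\cW_1^*\otimes\cV)^{\diag}$ gives $\tau(\varphi(G))=0$ by \cite[Proposition~2.19]{MinOvSingunip}, and $\ker\varphi$ is scalar hence of type~$0$, so $\tau(G)=0$ by Lemma~\ref{lem:inequtype}. For the non-constant case, after replacing $\cV$ by $\cV^*$ if necessary so that $\cW_2\subset\cV$, one uses \cite[Proposition~3.12]{HMO} to embed $\Ru(G)$ $G$-equivariantly into the simple non-constant module $W=W_1^*\otimes W_2$, and \cite[Proposition~3.21]{HMO} to conclude that $\Ru(G)$ is either $\{1\}$ or all of $W$. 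The substantive step --- and the only place the hypothesis that $G^\circ$ is non-commutative is used --- is to rule out $\Ru(G)=\{1\}$: if $G$ were reductive one extracts the semisimple part $S\subset G^\circ$, which must embed into $\SL_2$, hence $S\cong\SL_2$ or $\SL_2(C)$; but every differential representation of $\SL_2(C)$ is completely reducible, and by \cite[Theorem~4.11]{MinOvRepSL2} every $3$-dimensional indecomposable differential $\SL_2$-module is irreducible, contradicting indecomposability of $\cV$. This $\SL_2$ representation-theoretic input is the missing idea in your proposal.
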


\begin{proof}
Since the image of $G$ under  $\rho : G \to \Gl(W_1)$ is a DFGG, it is of type zero.
If $\cW:=\cW_1^*\otimes\cW_2$ is constant, so is $(\cW_1^*\otimes\cV)^{\diag}$. Hence, the image of $G$ under $\varphi : G \to \Gl(W_1^*\otimes V)$ is of type zero by \cite[Proposition 2.19]{MinOvSingunip}. Since $\ker\varphi$ consists of scalar transformations, it is isomorphic to a subgroup of $\rho(G)$, therefore $\tau(\ker\varphi)=0$. We conclude by Lemma~\ref{lem:inequtype} that $$\tau(G) = \max\{\tau(\ker\varphi),\tau(\varphi(G))\}=0.$$ Suppose now that $\cW$ is non-constant. Replacing $\cV$ with $\cV^*$ if needed, we obtain $$\cW_2\subset\cV.$$ 
By \cite[Proposition~3.12]{HMO}, there is a $G$-equivariant embedding of $\Ru(G)$ into $W$.  Since $W$ is irreducible and non-constant, it does not have nonzero proper invariant $\delta$-subgroups by \cite[Proposition 3.21]{HMO}. Therefore, either $\Ru(G)=\{1\}$ or $\Ru(G)\simeq W$. In the latter case, since $\dim W = 2$, $G$ is determined by $V^{\diag}$.

Finally, let us show that $\Ru(G)$ is non-trivial. Suppose the contrary: $G$ is reductive. Then $G^\circ$ is reductive and, by \cite[Proposition 2.23]{HMO}, $V$ is not semisimple as a $G^\circ$-module, because $\cV$ is indecomposable. Since $G^\circ$ is non-commutative, it contains a non-trivial semisimple part $S$. Note that $S$ embeds into $D:=\Gl(W_1)\times\Gl(W_2)$, therefore into $[D,D]\simeq\SL_2$. By  \cite[Theorem 19]{Cassimpl}, either $S\simeq\SL_2$ or $S\simeq\SL_2(C)$.  

If $V$ were semisimple as an $S$-module, it would also be semisimple as a $G^\circ$-module. Indeed, let $W'$ be an $S$-submodule of $V$ such that $V = W_2\oplus W'$ as $S$-modules. Since $G^\circ=Z\cdot S$ and $W'$ and $W_2$ are simple non-isomorphic $S$-modules, $Z$ and, therefore, $G^\circ$ must preserve each of them. Since $V$ is not semisimple as a $G^\circ$-module, we conclude that $V$ is not semisimple as an $S$-module. Since all differential representations of $\SL_2(C)$ are algebraic, they are also completely reducible. On the other hand, by \cite[Theorem~4.11]{MinOvRepSL2}, every indecomposable differential representation of $\SL_2$ of dimension~$3$ is irreducible.
\end{proof}

\subsection{Indecomposable upper-triangular case}\label{sec:idecuppertriang}
It remains to consider the case in which there exists a $G$-invariant filtration
\begin{equation}\label{eq:flag} 0=V_0\subset V_1\subset V_2\subset V_3=V,\end{equation}
 where $\dim V_r=r$. 
 Denote the subgroup of $\Gl(V)$ preserving the flag~\eqref{eq:flag} by $B$. We have $G\subset B$. Let us choose an ordered basis $E:=\{e_1,e_2,e_3\}$ of $V$ such that $V_r$ is spanned by $e_1,\ldots,e_r$. With respect to $E$, $B$ can be identified with the subgroup of upper triangular matrices in $\Gl_3(\k)$. Let $B'$ denote the subgroup of $B$ consisting of the elements preserving $\k e_2$.

We will use the notation for the cases from Proposition~\ref{prop:2solvable} to describe the cases for $\cV$. For example, we say that $\cV$ is of type (CQ,CR) if $\cV_2$ and $\cV/\cV_1$ correspond to CQ and CR, respectively. There are $3\times 3=9$ such pairs. 
\begin{itemize}[leftmargin=0.4cm,itemindent=0cm,labelsep=-0.6cm,itemsep=0.1cm,align=left]
\item Due to the duality $\cV\leftrightarrow\cV^*$, it suffices to consider only $6$ cases (e.g., we do not need to treat (CQ,CR) once we have done (CR,CQ)).  
\item The case (CQ,CQ) implies that $\cV^{\diag}$ is constant, so $\tau(G)=0$. 
\item The case (CR,CR) implies that $\cV$ is decomposable, hence we can use Proposition~\ref{prop:decomposable} to deal with this case. We will now assume that $\cV$ is indecomposable. 
\end{itemize}
So, it remains to consider the cases (CR,CQ),  (CR,NC), (CQ,NC), and (NC,NC). 
\begin{itemize}[leftmargin=0.5cm,itemindent=0cm,labelsep=-0.6cm,itemsep=0.1cm,align=left]
\item
If $\cV_2$ is semisimple, denote the invariant complement to $\cV_1$ in $\cV_2$ by  $\cU$. Then  $\cV/\cU$ is either of type (CQ) or (NC), otherwise $\cV$ would have been decomposable.
\begin{itemize}[leftmargin=0.5cm,itemindent=0cm,labelsep=-0.8cm,itemsep=0.1cm,align=left]
\item Since the module $\omega(\cV/\cV_1\oplus\cV/\cU)$ is faithful, the case (CR,CQ,CQ) -- we include the type of $\cV/\cU$ in the end -- implies $\tau(G)=0$.
\item For the case (CR,CQ,NC), we can identify $G$ with a subgroup of $B'$. Then $G$ is determined by $(B',\omega(\cV_1\oplus\cV/\cV_1))$.  Indeed, it is sufficient to show that $G$ contains the group
\begin{equation}\label{eq:Z}
Z=\left\{\begin{pmatrix}
1 & 0 &a\\
0 & 1 & 0\\
0 & 0& 1
\end{pmatrix},\ a\in\k\right\}.
\end{equation}
which is the kernel of the action of $B'$ on $W := \omega(\cV_1\oplus\cV/\cV_1)$. Since the image of $G$ in $\GL(W)$ has differential type $0$ by the assumption and Proposition~\ref{prop:commutative}, 
and, on the other hand, $\tau(G)=1$ by the NC part of (CR,CQ,NC), the kernel of the action of $G$ on $W$ (which belongs to $Z$) has differential type $1$. Then it coincides with $Z$, since every proper subgroup of $\Ga$ has differential type $0$~\cite[Example 2.7(1)]{CassSingerJordan}.
\item The case (CR,NC,CQ) becomes (CR,CQ,NC), which we have considered, after the permutation of $V_1$ and $U$.
\item  It remains to consider the case (CR,NC,NC). Let $g \in G$. Then there exist polynomial functions $a,c,e : G \to \k^\times$ and differential polynomial functions $b,d : G \to \k$ such that, for all $g\in G$,
\begin{equation}\label{eq:G}
g=\begin{pmatrix}
a(g)& 0 &b(g)\\
0 &c(g)&d(g)\\
0 & 0& e(g)
\end{pmatrix}.
\end{equation}
We claim that $G$ is determined by $\omega(\cV_2\oplus\cV/\cV_2)$. We will show this for a more general situation: the polynomial function $\frac{a}{c}$ can be taking values in $C^\times$ (we will {\em use this later} dealing with the case (CQ,NC)). 
It is sufficient to show that $G$ contains 
\begin{equation}\label{eq:Y}
Y:=\left\{y(u,v) :=\begin{pmatrix}  
1& 0 &u\\
0 &1&v\\
0 & 0& 1
\end{pmatrix},\ u,v\in \k\right\}.
\end{equation}
Note that $\tau(G)=1$ by the NC conditions. It follows that $\tau(\Ru(G))=1$ since $G/\Ru(G)$ is commutative, hence of type 0 by Proposition~\ref{prop:commutative}. In particular, $G$ is not reductive. 
Since the images of  $\frac{a}{e}$ and $\frac{c}{e}$ are not contained in $C^\times$, it follows from~\cite[Proposition~3.21]{HMO} that $\Ru(G)$ is a vector space over $\k$. Since $\Ru(G)$ can be identified with a submodule of $\omega(\cV_2\otimes (\cV/\cV_2)^*)$ by \cite[Lemma~3.6]{HMO}, we have, up to a permutation of $e_1$ and $e_2$, the following possibilities:
\begin{enumerate}[leftmargin=0.6cm,itemindent=0cm,labelsep=-1.1cm,itemsep=0.1cm,align=left]
 \item\label{case1} $\Ru(G)=Y$;
 \item\label{case2} $\Ru(G)=\{y(u,0),\ u\in \k\} \subset Y$;
 \item\label{case3} $a=c$ and $\Ru(G)\cong\Ga$. 
\end{enumerate}
In case~\eqref{case1}, there is nothing left to prove. Case~\eqref{case3} reduces to case~\eqref{case2} by a suitable choice of basis of $V$. Suppose we have case~\eqref{case2}. By the NC condition, $\Ru(\Galdelta(\cV/\cV_1))\cong\Ga$. 
Therefore, for every $h \in \k$, there exist $h_1\in\k^\times$ and $h_2 \in\k$ such that
$$
z := \begin{pmatrix}
h_1 & 0 &h_2\\
0 &1&h\\
0 & 0& 1
\end{pmatrix} \in G.
$$
Then, for all $h \in \k$,
\begin{equation}\label{eq:h1h}
y(-h_2,0)z =\begin{pmatrix}
h_1 & 0 &0\\
0 &1&h\\
0 & 0& 1
\end{pmatrix} \in G.
\end{equation}
Let $$
\Gamma = G\cap\left\{\begin{pmatrix}f_1&0&0\\
0&1&f\\
0&0&1
\end{pmatrix},\ f_1 \in \k^\times, f \in \k\right\},
$$
which is a differential algebraic subgroup with $\tau(\Gamma)=1 $ by~\eqref{eq:h1h}. Since the restriction of $\Gamma$ to $V_1$, being a differential algebraic subgroup of the restriction of $G$ to $V_1$, is of type 0 and by \cite[Remark~2.7.5]{CassSingerJordan}, $$\Gamma_0= \left\{\begin{pmatrix}1&0&0\\
0&1&f\\
0&0&1
\end{pmatrix},\  f \in \k\right\},$$ which is normal in $G$, unipotent, and not contained in $\Ru(G)$, as we are in case~\eqref{case2}. Contradiction.
\end{itemize}
\item It remains to consider (NC,NC) and (CQ,NC).\\
\end{itemize}

\begin{proposition}\label{prop:commutator}
Let $G\subset\Gl_n(\k)$ be a subgroup and $\overline{G}$ its Zariski closure. Then $$[\overline{G},\overline{G}]=\overline{[G,G]}.$$ 
\end{proposition}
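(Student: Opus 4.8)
The plan is to prove the two inclusions separately; the inclusion $[\overline{G},\overline{G}]\subseteq\overline{[G,G]}$ is the substantive one, while the reverse inclusion reduces to a classical closedness statement.

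\emph{Proof that $[\overline{G},\overline{G}]\subseteq\overline{[G,G]}$.} Since $[G,G]$ is a normal subgroup of $G$, its Zariski closure $N:=\overline{[G,G]}$ is a closed normal subgroup of $\overline{G}$: conjugation by a fixed element is a homeomorphism of $\overline{G}$, and it carries $[G,G]$ to itself for elements of $G$, so the normalizer of $N$ in $\overline{G}$ is a closed subgroup containing $G$, hence all of $\overline{G}$. Form the quotient algebraic group $\overline{G}/N$ with quotient morphism $\pi$. The image $\pi(G)$ is Zariski dense in $\overline{G}/N$ — any closed subset containing it has closed preimage containing $G$, hence equal to $\overline{G}$ — and $\pi(G)$ is abelian, because every commutator of elements of $G$ lies in $[G,G]\subseteq N$. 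Now I invoke the elementary fact that a linear algebraic group having a Zariski dense abelian subgroup $A$ is abelian: for each $a\in A$ the centralizer of $a$ is a closed subgroup containing the dense set $A$, hence is the whole group, so $a$ is central; thus $A$ lies in the centre, which is closed and contains the dense set $A$, so the group equals its centre. Applied to $\overline{G}/N$ this gives that $\overline{G}/N$ is abelian, i.e. $[\overline{G},\overline{G}]\subseteq N=\overline{[G,G]}$.

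\emph{Proof that $\overline{[G,G]}\subseteq[\overline{G},\overline{G}]$.} Trivially $[G,G]\subseteq[\overline{G},\overline{G}]$, so it is enough to know that $[\overline{G},\overline{G}]$ is Zariski closed; then passing to closures yields the inclusion. That the derived subgroup of a linear algebraic group is closed is classical: for connected $\overline{G}$ it follows from the principle that the subgroup generated by the image of a morphism from an irreducible variety whose image meets the identity is closed (and connected), applied to the commutator morphism $\overline{G}\times\overline{G}\to\overline{G}$, $(x,y)\mapsto xyx^{-1}y^{-1}$; see, e.g., \cite{Humphline}. For disconnected $\overline{G}$ one reduces to this by first passing to $\overline{G}\big/[\overline{G}^\circ,\overline{G}^\circ]$, whose identity component is commutative, and then dividing out the connected (hence closed) subgroup generated by the homomorphisms $a\mapsto gag^{-1}a^{-1}$ of that identity component, after which the centre has finite index and Schur's theorem makes the commutator subgroup finite, hence closed.

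Combining the two inclusions gives $[\overline{G},\overline{G}]=\overline{[G,G]}$. Apart from the dense-abelian-subgroup lemma, which is immediate, the only point that genuinely requires attention is the closedness of $[\overline{G},\overline{G}]$ when $\overline{G}$ is disconnected; I expect that reduction to be the sole real obstacle, all remaining steps being routine manipulations with Zariski closures of subgroups.
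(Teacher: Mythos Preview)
Your proof is correct and follows the same two-inclusion structure as the paper's: the paper obtains $\overline{[G,G]}\subset[\overline{G},\overline{G}]$ by citing \cite[Proposition~17.2]{Humphline} for closedness of the derived group, and $[\overline{G},\overline{G}]\subset\overline{[G,G]}$ by citing \cite[Theorem~4.3(c)]{Waterhouse:IntrotoAffineGroupSchemes}, whereas you unpack both references into self-contained arguments (the quotient/centralizer argument you give for the second inclusion is essentially what lies behind the Waterhouse citation). Your handling of the disconnected case for closedness of $[\overline{G},\overline{G}]$ is more detailed than what the paper records, but this is elaboration rather than a different route.
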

\begin{proof}
Since the commutator group of a linear algebraic group is Zariski closed \cite[Proposition 17.2]{Humphline}, we have 
$$\overline{[G,G]}\subset[\overline{G},\overline{G}].$$
The other inclusion follows immediately from~\cite[Theorem 4.3(c)]{Waterhouse:IntrotoAffineGroupSchemes}.
\end{proof}

\begin{proposition}\label{prop:ncnc}
Suppose that $G$ is of type (NC,NC).
\begin{enumerate}[leftmargin=0.5cm,itemindent=0cm,labelsep=0.55cm,itemsep=-0.25cm,align=parleft]
 \item If $[G,G]$ is commutative, then $G$ is determined by $V_2$.\\
 \item If $[G,G]$ is not commutative, then $G$ is determined by $V^{\diag}$.
\end{enumerate}
\end{proposition}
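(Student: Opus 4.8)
The plan is to follow the same pattern that worked in the earlier upper-triangular subcases: identify a candidate unipotent subgroup $Y$ (or $Z$) inside $G$ that accounts for the "extra" differential type, show that $\tau(G)=1$ forces the kernel of an appropriate action of $G$ to be this full group rather than a proper $\delta$-subgroup of $\Ga$ or of $Y$, and then read off that $G$ is determined by the indicated module. Concretely, write a general element of $G\subset B$ as in~\eqref{eq:G} and note that the (NC,NC) hypothesis gives, via Lemma~\ref{lem:inequtype} applied to $\Galdelta(\cV_2)$ and $\Galdelta(\cV/\cV_1)$, that $\tau(G)=1$ while $G/\Ru(G)$ is commutative (diagonal-type) and hence of type $0$ by Proposition~\ref{prop:commutative}; therefore $\tau(\Ru(G))=1$. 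As in the (CR,NC,NC) analysis, using \cite[Lemma~3.6]{HMO} to embed $\Ru(G)$ into $\omega(\cV_2\otimes(\cV/\cV_2)^*)$ together with \cite[Proposition~3.21]{HMO} (the images of $a/e$ and $c/e$ are non-constant), one sees $\Ru(G)$ is a $\k$-vector space, so up to permuting $e_1,e_2$ either $\Ru(G)=Y$, or $\Ru(G)=\{y(u,0)\}$, or $a=c$ with $\Ru(G)\cong\Ga$.

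First I would handle the case where $[G,G]$ is commutative (part (1)). Here I expect one can rule out the "small" possibilities for $\Ru(G)$ and conclude $G\supset Y$, so that $G$ is determined by the action on $V_2$ (i.e.\ by $V_2=\omega(\cV_2\oplus\cV/\cV_2)$ in the notation already used). The mechanism should be the one in~\eqref{eq:h1h}: if $\Ru(G)$ were only $\{y(u,0)\}$ one produces, using the NC condition on $\cV/\cV_1$ (so $\Ru(\Galdelta(\cV/\cV_1))\cong\Ga$) and a translation by an element of $\Ru(G)$, a new normal unipotent subgroup of the form $\{\operatorname{diag}\text{-type with a nonzero }(2,3)\text{-entry}\}$ not contained in $\Ru(G)$, a contradiction; the commutativity of $[G,G]$ is what makes the relevant commutators land in $Y$ and keeps the bookkeeping clean. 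The case $a=c$, $\Ru(G)\cong\Ga$ should reduce to the previous one after a change of basis, exactly as in the (CR,NC,NC) argument.

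Next I would treat part (2), where $[G,G]$ is non-commutative. The point is that a non-commutative $[G,G]$ inside the upper-triangular group $B$ forces, via Proposition~\ref{prop:commutator} (so $[\overline G,\overline G]=\overline{[G,G]}$) and a Zariski-closure computation in $B\subset\GL_3$, the full unipotent radical of $B$ — i.e.\ all of $Y$ together with the $(1,3)$-corner $Z$ of~\eqref{eq:Z} — to lie in $\overline G$, and then the differential-type/determined-by machinery upgrades this to $\Ru(G)$ being the full $\k$-span, giving that $G$ contains the kernel of the action of $B$ on $V^{\diag}$ and hence is determined by $V^{\diag}$. Here I would again invoke \cite[Proposition~3.21]{HMO} to control $\delta$-invariant subgroups of the relevant modules, and the index computation Proposition~\ref{prop:index} to pass from $G^\circ$ to $G$.

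The main obstacle I anticipate is the case analysis in part (1): carefully excluding $\Ru(G)=\{y(u,0)\}$ and the degenerate $a=c$ situation requires producing the right auxiliary element of $G$ and verifying it yields a normal unipotent subgroup strictly larger than $\Ru(G)$ — the same delicate step that appeared in (CR,NC,NC), but now without the extra "CR" semisimplicity to lean on, so one must extract the needed splitting purely from the (NC,NC) structure and the hypothesis on $[G,G]$. A secondary subtlety is making the phrase "determined by $V_2$" precise in the sense of Definition~\ref{def:determined}, i.e.\ choosing $H=B$ (or $B'$) as the ambient algebraic group and checking $G=P\cap\overline G$, which needs the bijection on connected components from \cite[Corollary~3.7]{MinOvSing}.
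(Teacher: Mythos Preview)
Your outline for part~(2) is essentially right in spirit: since $[G,G]\subset\Ru(G)$, non-commutativity of $[G,G]$ forces $\Ru(G)$ to be non-commutative, and this in turn forces $\Ru(G)=[B,B]$, the kernel of the action on $V^{\diag}$. The paper reaches this via a Lie-algebra dichotomy (setting $A:=\ra/(\ra\cap\Lie Z)\subset L:=\Lie[B,B]/\Lie Z$, it shows $A=L$ iff $\Ru(G)$ is non-commutative) rather than through Proposition~\ref{prop:commutator}, but the destination is the same.

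Part~(1), however, has a real gap. You are transplanting the (CR,NC,NC) argument, aiming to show $G\supset Y$ and to read ``determined by $V_2$'' as ``determined by $\omega(\cV_2\oplus\cV/\cV_2)$''. Both points are wrong. First, in type (NC,NC) the group $G$ lies in the full Borel $B$, not in $B'$: the $(1,2)$-entry is generically nonzero, so $\Ru(G)$ is a $\delta$-subgroup of the $3$-dimensional group $[B,B]$, not of $Y$, and your three-case list from~\eqref{eq:Y} does not describe the possibilities. Second, when $[G,G]$ is commutative $\Ru(G)$ is genuinely allowed to be small (a one- or two-parameter group of the shape~\eqref{eq:ru}); you cannot hope to contradict this. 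The statement ``determined by $V_2$'' refers to the single restriction $\varrho:\overline G\to\GL(V_2)$, and what must be proved is $\Ker\varrho\subset G$, equivalently $\Ru(G)=\Ru(\overline G)$. The paper's route is: the dichotomy $A\neq L$ yields the relation $a_{11}/a_{22}=a_{22}/a_{33}$ on $G$ (making $\Ker\varrho$ unipotent), and then one splits into the subcases $Z\subset G$ versus $Z\cap G=\{1\}$. The second subcase is where the substance lies: one writes $\Ru(G)$ explicitly as in~\eqref{eq:ru}, computes its normalizer in $B$, and uses $C^\times\subset\operatorname{im}(a_{22}/a_{33})$ to force $\widetilde\varphi=0$, whence $\Ru(G)=\overline{\Ru(G)}$ and a Levi-type decomposition $G=F\ltimes\Ru(G)$ with $\overline F$ reductive finishes the identification. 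None of this machinery appears in your proposal, and the (CR,NC,NC) contradiction argument you invoke does not substitute for it.
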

\begin{proof}
Since $G$ is of type (NC,NC), $\tau(G) = 1$.
Since $G$ is solvable, ${(G/\Ru(G))}^\circ$, being a connected solvable reductive LDAG, is a $\delta$-torus.
Hence, by 
Proposition~\ref{prop:commutative}, $\tau(G/\Ru(G))=0$. Therefore, by definition, $G_0 \subset \Ru(G)$.

Let $\ra$ stand for the Lie algebra of $\Ru(G)$ (see \cite[Section~4.1]{HMO} for a quick and sufficient overview) and Z be defined by~\eqref{eq:Z}.
 Then, under the matrix conjugation, $$A:=\ra/(\ra\cap\Lie{Z})$$ is a $G$-invariant $\delta$-subgroup of the semisimple (2-dimensional) $G$-module $$L:=\Lie{[B,B]}/\Lie{Z}$$ (recall that $B$ is the group of all upper triangular matrices, and so $\Lie[B,B]$ is the Lie algebra of upper triangular matrices with $0$ on the diagonal). By the (NC,NC) assumption and since, for all $a,b$, $a_{ij}$, $1 \leqslant i \leqslant j \leqslant 3$,
$$\begin{pmatrix}
a_{11}&a_{12}&a_{13}\\
0&a_{22}&a_{23}\\
0&0&a_{33}
\end{pmatrix}^{-1}
\begin{pmatrix}
0&a&0\\
0&0&b\\
0&0&0
\end{pmatrix}
\begin{pmatrix}
a_{11}&a_{12}&a_{13}\\
0&a_{22}&a_{23}\\
0&0&a_{33}
\end{pmatrix}=\begin{pmatrix}
0&\frac{a_{22}}{a_{11}}a&\frac{a_{23}}{a_{11}}a-\frac{a_{12}a_{33}}{a_{11}a_{22}}b\\
0&0&\frac{a_{33}}{a_{22}}b\\
0&0&0
\end{pmatrix},
$$
$G$ acts on $L$ purely non-constantly (Definition~\ref{def:purconst}). Therefore, by \cite[Proposition~3.21]{HMO},  $A$ is a $\k$-subspace of $L$. 
By \cite[Proposition~22]{Cassidy:differentialalgebraicgroups}, the projections of $A$ onto the $(1,2)$ and $(2,3)$ entries are non-trivial.  Hence,
if $A\neq L$, then the action of $G$ on $L$ is isotypic: 
\begin{equation}\label{eq:same}
\frac{a_{11}}{a_{22}}=\frac{a_{22}}{a_{33}}
\end{equation} 
on $G$. Moreover, if $A\neq L$, there exists $c\in \k^\times$ such that 
\begin{equation}\label{eq:matrix}
\ra \subset\left\{
\begin{pmatrix}
0 & a & b\\
0&0&ca\\
0&0&0
\end{pmatrix},\ a,b \in\k\right\}.
\end{equation} Therefore, $\ra$ is a commutative Lie algebra, and so $\Ru(G)$ is commutative.  
If $A=L$, then $\ra\supset [\ra,\ra]=\Lie Z$. 
Therefore, if $A=L$, then $\ra=\Lie[B,B]$, and so  $\Ru(G)$ is the whole group of unipotent matrices $[B,B]$.
Since $[G,G]$ is normal and unipotent, $$[G,G]\subset\Ru(G).$$ Therefore, if $[G,G]$ is non-commutative, so is $\Ru(G)$. By the above, this implies that $$[B,B]\subset G.$$ Hence, $G$ is determined by $V^{\diag}$.

Suppose now that $[G,G]$ is commutative. We will show that
\begin{equation}\label{eq:radical}
\Ru(G)=\Ru(\overline{G}).
\end{equation}
It would follow then that $G$ is determined by $V_2$. Indeed, let $\varrho:\overline{G}\to\GL(V_2)$ be the restriction to $V_2$ of the natural representation of $\overline{G}$ on $V$.   \eqref{eq:same} implies that $\Ker\varrho$ consists of unipotent matrices. Hence, $\Ker\varrho \subset \Ru(\overline{G})$. If~\eqref{eq:radical} holds, then
$$
G\subset\varrho^{-1}(\varrho(G)) = G\cdot\Ker\varrho \subset G\cdot\Ru(\overline{G})=G\cdot\Ru(\overline{G})=G.
$$
Therefore, $G$ is determined by $V_2$.

As we have shown above, $A\neq L$ and $\frac{a_{11}}{a_{22}}=\frac{a_{22}}{a_{33}}$ on $G$. In particular, the function $$\frac{a_{11}}{a_{33}}=(a_{11}/a_{22})^2$$ from $G$ to $\k^\times$ does not have its image contained in $C^\times$. This is the character of the action of $G$ on $\Lie{Z}$ by conjugation.
Since $\Lie G\cap\Lie Z$ is $G$-invariant,  \cite[Proposition~3.21]{HMO} implies that $\Lie G\cap\Lie{Z}$ is a $\k$-subspace of $\Lie Z$, and so  either $Z\subset G$ or $Z\cap G=\{1\}$. 

Suppose first that $Z\subset G$. Since  $[G,G]$ is commutative, Proposition~\ref{prop:commutator} implies that $[\overline{G},\overline{G}]$ is commutative. This is possible only if $\Lie\Ru(\overline{G})/\Lie{Z}$ is proper in $L$: if $\Lie\Ru(\overline{G})/\Lie{Z}=L$, $\Ru(\overline{G})=[B,B]$, which is not commutative. Since $\Lie\Ru(\overline{G})/\Lie{Z}$ contains $A$, it therefore coincides with $A$ (all non-zero proper $\k$-subspaces of $L$ are maximal since $\dim_\k L=2$). Since $Z\subset\Ru(G)$, we conclude that the Lie algebras of $\Ru(G)$ and of $\Ru(\overline{G})$ coincide. Hence, by~\cite[Proposition 26]{Cassidy:differentialalgebraicgroups}, \eqref{eq:radical} holds.

It remains to consider the case in which $[G,G]$ is commutative and $Z\cap G=\{1\}$. By rescaling $e_3$, we will assume that $c=1$ in~\eqref{eq:matrix}. Hence, there exists a differential polynomial $\varphi(x)$ such that 
\begin{equation}\label{eq:ru}
\Ru(G)=\left\{\begin{pmatrix}
1 & x & \varphi(x)\\
0 & 1 & x\\
0 & 0 & 1
\end{pmatrix},\ x\in\k\right\}.
\end{equation}
Since $\Ru(G)$ is a group, for all $x,y \in \k$,
\begin{equation}\label{eq:phixy}
\varphi(x+y)=\varphi(x)+\varphi(y)+xy.
\end{equation}
For all $x\in\k$, define $\widetilde\varphi(x)=\varphi(x)-\frac{x^2}{2}$. Then~\eqref{eq:phixy} is equivalent to, for all $x,y\in\k$, $\widetilde\varphi(x+y)=\widetilde\varphi(x)+\widetilde\varphi(y)$. Therefore, $\widetilde\varphi$ is a homogeneous linear differential polynomial.
Let $\gamma_i \in \k$ be such that $\widetilde\varphi=\sum\limits_i\gamma_ix^{(i)}$. By the basis change $e_2\mapsto e_2-\gamma_0e_1$, we may assume that $\gamma_0=0$. 
For all $a,\lambda\in\k^\times$ and $b,c,d \in \k$, let
$$
M(\lambda,a,b,c,d)=\begin{pmatrix}
a\lambda^2 &a b & ac\\
0 & a\lambda & ad\\
0 & 0 & a
\end{pmatrix}.
$$
Let $N$ be  the normalizer  of $\Ru(G)$ in $B$. A computation shows that
$$
N = \left\{M(\lambda,a,b,c,d)\:|\:\lambda,a\in\k^\times, b,c,d\in\k:\forall x\in\k\ \widetilde\varphi(\lambda x)=\lambda^2\widetilde\varphi(x)+(b-\lambda d)x\right\}.
$$
Let $f : G\to \k^\times$ be the homomorphism given by $\frac{a_{22}}{a_{33}}$. By the NC condition, $f(G)$ is a Zariski dense $\delta$-subgroup of $\k^\times$. By \cite[Proposition~31]{Cassidy:differentialalgebraicgroups}, $C^\times \subset f(G)$.  Since $G \subset N$, for all $\lambda \in C^\times$, there exist $a\in\k^\times$, $b,c,d\in\k$ such that $M(\lambda,a,b,c,d) \in G$, which implies that $\widetilde\varphi =0$. Therefore, 
\begin{equation}\label{eq:Ruoverline}
\Ru(G)=\overline{\Ru(G)}
\end{equation}  and, for all $\lambda,a \in \k^\times$ and $b,c,d \in \k$, $M(\lambda,a,b,c,d) \in N$ implies that $b=\lambda d$. Hence, for all $a,b,c,d,e \in \k$, if
$$
\begin{pmatrix}
a& 0 &b\\
0& c&d\\
0&0&e
\end{pmatrix} \in N,
$$
then $d =0$. Let $F \subset G$ be the $\delta$-subgroup defined by $a_{12}=a_{23}=0$.  Since $Z\cap G = \{1\}$, $F\cap\Ru(G) = \{1\}$ and $F$ acts faithfully on $V^{\diag}$, and so $F$ is commutative. Hence, $\overline{F}$ is commutative.
For any $b\in B$, there exists $u \in \Ru(G)$ such that $a_{12}(bu)=0$. Thus,  $G = F\ltimes \Ru(G)$. Since there exist $a,b,c,d \in \k$ such that
$M(2,a,b,c,d) \in G$, there exists $v \in \Ru(G)$ and $a',c'\in\k$ such that $$vM(2,a,b,c,d)=M(2,a',0,c',0) =: m \in F.$$
Suppose that $\Ru(\overline{F})=Z$. Let $1 \ne z \in Z$. Then $m^{-1}zm\ne z$. This contradicts the commutativity of $\overline{F}$.
Therefore, $\overline{F}$ is reductive. By \cite[Corollary~7.4]{Humphline} and~\eqref{eq:Ruoverline}, we conclude that $$\overline{G} = \overline{F}\ltimes \overline{\Ru(G)}=\overline{F}\ltimes \Ru(G).$$
Thus,~\eqref{eq:radical} holds. 
\end{proof}

\begin{lemma}\label{lem:firstorderphipsi}
Let $A\subset \k^\times$ be a Kolchin closed subgroup that has a non-constant element. If $\varphi: A\to \k$ and $\psi: \k\to \k$ are differential polynomial maps such that
\begin{align}
\varphi(ab) &=a\varphi(b)+b\varphi(a)\qquad\forall\ a,b\in A\label{eq:58}\\
\psi(ax) &=a\psi(x)+x\varphi(a)\qquad\forall\ a\in A,\ x\in k\label{eq:59},
\end{align} 
then there exist $a_0, a_1 \in \k$ such that, for all $x \in \k$ and $a \in A$, $$\psi(x)=a_0x+a_1x'\quad \text{and}\quad \varphi(a)=a_1a'.$$
\end{lemma}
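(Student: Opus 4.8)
The plan is to reduce to the case where $\psi$ is a \emph{linear} differential polynomial, and then conclude by a coefficient comparison; the only nontrivial input needed is that $A$, having a non-constant element, contains $C^\times$. First I would normalize: setting $x=1$ in~\eqref{eq:59} gives $\psi(a)=a\psi(1)+\varphi(a)$ for $a\in A$, so replacing $\psi(x)$ by $\psi(x)-\psi(1)x$ leaves~\eqref{eq:58}--\eqref{eq:59} valid with the same $\varphi$ and makes $\psi(1)=0$; then $\varphi=\psi|_A$. If in this normalized situation one proves $\psi(x)=a_1x'$ and $\varphi(a)=a_1a'$, then adding $\psi(1)x$ back recovers the asserted formulas with $a_0=\psi(1)$.

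Next I would reduce to a linear $\psi$. Since $A$ is Kolchin closed and has a non-constant element, its Zariski closure in $\Gm$ is all of $\Gm$ (a proper closed subgroup of $\Gm$ is finite, hence consists of roots of unity, which are constants), so $C^\times\subset A$ by \cite[Proposition~31]{Cassidy:differentialalgebraicgroups}. Restricting~\eqref{eq:58} to $s,t\in C^\times$ and using that constants have vanishing derivatives, $t\mapsto\psi(t)$ is an ordinary polynomial in $t$ satisfying $\psi(st)=s\psi(t)+t\psi(s)$; comparing coefficients (i.e.\ $\Hom_{\mathrm{alg}}(\Gm,\Ga)=0$) forces $\psi|_{C^\times}=0$. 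Putting $a=t\in C^\times$ in~\eqref{eq:59} then gives $\psi(tx)=t\psi(x)$ for all $t\in C^\times$, $x\in\k$. Writing $\psi(x)=F(x,x',\dots,x^{(n)})$ with $F\in\k[y_0,\dots,y_n]$ — a well-defined polynomial, since over the differentially closed field $\k$ two differential polynomials inducing the same function coincide — the identity $F(ty_0,\dots,ty_n)=tF(y_0,\dots,y_n)$ holds for every $t\in C^\times$, and as $C^\times$ is infinite this forces $F$ homogeneous of degree $1$; with $\psi(1)=0$ this yields $\psi(x)=\sum_{i=1}^{n}a_ix^{(i)}$ for some $a_i\in\k$.

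Finally I would substitute this linear form into~\eqref{eq:59} for arbitrary $a\in A$. Expanding $(ax)^{(i)}$ by the Leibniz rule and comparing the coefficient of $x^{(k)}$ for each $k\ge 1$ — the term $x\varphi(a)$ contributes only to $x^{(0)}$, and the $i=k$ summand cancels the $a_ka$ coming from $a\psi(x)$ — gives $\sum_{i>k}\binom{i}{k}a_i\,a^{(i-k)}=0$ for all $a\in A$. If some $a_i\ne 0$ with $i\ge 2$, take $n$ the largest such index and $k=n-1$: then $n\,a_n\,a'=0$ for every $a\in A$, contradicting the existence of a non-constant element in $A$. Hence $\psi(x)=a_1x'$, so $\varphi(a)=\psi(a)=a_1a'$ on $A$, and undoing the normalization finishes the proof. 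The step I expect to be the crux is the reduction to a linear $\psi$: one must keep $\varphi$ fixed while shifting $\psi$, and pass correctly from functional identities to polynomial identities in $\k[y_0,\dots,y_n]$, so that ``degree-one homogeneous'' genuinely means $\psi$ is a linear differential operator; after that, the whole conclusion rests only on the fact that some element of $A$ has nonzero derivative.
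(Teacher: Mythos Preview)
Your proof is correct and follows the same overall architecture as the paper's: use $C^\times\subset A$ (from Zariski density) to show $\varphi|_{C^\times}=0$, deduce $\psi(cx)=c\psi(x)$ for $c\in C^\times$, conclude that $\psi$ is a linear differential polynomial, and then force the order to be at most~$1$. Your normalization $\psi\mapsto\psi-\psi(1)x$ is exactly the paper's relation $\varphi(a)=\psi(a)-a\psi(1)$, and your homogeneity argument for linearity is the same as the paper's $\deg\psi\le 1$ step.

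The one genuine difference is the final step. The paper substitutes the linear form of $\psi$ into the \emph{two-variable} identity $\psi(ab)=a\psi(b)+b\psi(a)-ab\psi(1)$ (coming from~\eqref{eq:58}), obtaining for each non-constant $a\in A$ a linear differential polynomial $f(y)$ of order $h-1$ that vanishes on $A$; it then argues that $b,b^2,\ldots,b^h\in A$ are $C$-linearly independent solutions of $f=0$, contradicting the bound $\dim_C\Sol(f)\le h-1$. You instead substitute directly into~\eqref{eq:59}, where $x$ ranges over all of $\k$, and compare coefficients of $x^{(k)}$; taking $k=n-1$ for the top nonzero coefficient $a_n$ immediately gives $na_na'=0$, a contradiction. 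Your route is shorter precisely because it exploits the unconstrained variable $x\in\k$ rather than a second variable confined to $A$, avoiding the solution-space dimension count. A minor quibble: the parenthetical ``$\Hom_{\mathrm{alg}}(\Gm,\Ga)=0$'' is not literally what you use (the condition $\psi(st)=s\psi(t)+t\psi(s)$ is a derivation identity, not a homomorphism), though after observing $c_0=0$ one can indeed rewrite it as $\psi(t)/t$ being a polynomial homomorphism $\Gm\to\Ga$; either way the coefficient comparison you intend is correct.
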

\begin{proof}
Note that $A$ has to be Zariski dense in $k$, and therefore has to contain all non-zero elements of the constants $C\subset k$ \cite[Proposition~31]{Cassidy:differentialalgebraicgroups}. 
Condition~\eqref{eq:58} implies that the restriction of $\varphi$ to $\Q^\times$ is a derivation from $\Q^\times$ to $k$ and, therefore, is the zero map (using a standard argument). This implies that $\varphi(C^\times)=\{0\}$, because the restriction of $\varphi$ to $C^\times$ is polynomial. 
Therefore, condition~\eqref{eq:59} implies that, for all $c\in C^\times$ and $x \in k$, \begin{equation}\label{eq:psicx} \psi(cx)=c\psi(x).\end{equation} 
Hence, $\psi(0)=\psi(2\cdot 0) = 2\cdot \psi(0)$, and so $\psi(0)=0$.
Let $h$ be the order of $\psi$. In coordinates, $\psi(y) \in k[y,y',\ldots,y^{(h)}]$. Then~\eqref{eq:psicx} implies that $\deg \psi\leq 1$. Let  $a_0,\ldots,a_h \in k$ be such that $$\psi(y) = \sum\limits_{i=0}^h a_iy^{(i)}.$$
Condition~\eqref{eq:59} also implies that, for all $a \in A$, 
\begin{equation}\label{eq:phipsi}
\varphi(a)=\psi(a)-a\psi(1).
\end{equation} Hence, for all $a,\:b\in A$, 
$$
\psi(ab)=a\psi(b)+b\psi(a)-ab\psi(1).
$$
We then have, for all $a,\:b\in A$, 
$$
\sum\limits_{i=0}^h a_i\sum_{j=0}^i\binom{i}{j}a^{(j)}b^{(i-j)}= \sum\limits_{i=0}^h a_i\left(ab^{(i)}+ba^{(i)}\right)-aba_0,
$$
which implies that
$$
\sum\limits_{i=2}^h a_i\sum_{j=0}^i\binom{i}{j}a^{(j)}b^{(i-j)}= \sum\limits_{i=2}^h a_i\left(ab^{(i)}+ba^{(i)}\right).
$$
Hence,  for all $a,\:b\in A$, 
$$
\sum\limits_{i=2}^h a_i\sum_{j=1}^{i-1}\binom{i}{j}a^{(j)}b^{(i-j)}= 0.
$$
For each non-constant $a \in A$, this implies that the non-zero linear differential polynomial
$$f(y) := \sum\limits_{i=2}^h a_i\sum_{j=1}^{i-1}\binom{i}{j}a^{(j)}y^{(i-j)}
$$
is in the defining ideal of $A$. Let $b \in A$ be another non-constant element. Since $C$ is algebraically closed, $b$ is transcendental over $C$, and so $b,b^2,\ldots, b^h$ are linearly independent over $C$ and $f(b^i)=0$ for all $i$. This contradicts with $f(y)=0$, being an $(h-1)$-st order linear differential equation over $k$, having a solution space that is $(h-1)$-dimensional over $C$. Thus, $h=1$, and so $\psi(y)=a_0y+a_1y'$. Finally, by~\eqref{eq:phipsi}, for all $a\in A$, $\varphi(a) = a_0a+a_1a'-aa_0=a_1a'$.
\end{proof}

\begin{proposition}\label{prop:cqnc}
Suppose that $G$ is of type (CQ,NC). Then, either
\begin{enumerate}[leftmargin=0.5cm,itemindent=0cm,labelsep=0.55cm,itemsep=0.1cm,align=parleft]
\item $G$ is determined by $\omega(\cV_2\oplus\cV/\cV_2)$ or
\item  $\cV_2$ is not semisimple, the restriction of $G$ to $\omega\left(\cV_2\otimes\left(\cV/\cV_2\right)^*\right)$ is reductive,  and $\cV$ belongs to the tensor category generated by the first prolongation of $\cV/\cV_1$.
\end{enumerate}
\end{proposition}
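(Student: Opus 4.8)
The plan is to analyze the structure of the unipotent radical $\Ru(G)$ inside the group $Z$ (defined by~\eqref{eq:Z}) together with the behaviour of the characters of $G$ appearing on the diagonal. Since $G$ is of type (CQ,NC), we have $\tau(G)=1$, the subquotient $\cV_2$ is of type (CQ) — so $a_{11}/a_{22}$ takes values in $C^\times$ on $G$ — and $\cV/\cV_1$ is of type (NC), so $a_{22}/a_{33}$ is Zariski dense but not constant in $\k^\times$. As in the (CR,NC,NC) analysis carried out before Proposition~\ref{prop:ncnc}, the key structural input is that $G/\Ru(G)$ is commutative, hence of differential type $0$ by Proposition~\ref{prop:commutative}, so $\tau(\Ru(G))=1$ and $G$ is not reductive. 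First I would identify $\Ru(G)$ as a $G$-submodule of $\omega(\cV_2\otimes(\cV/\cV_2)^*)$ via~\cite[Lemma~3.6]{HMO}; since the character $a_{22}/a_{33}$ (and likewise, using $a_{11}/a_{22}\in C^\times$, the character $a_{11}/a_{33}=(a_{11}/a_{22})(a_{22}/a_{33})$) is non-constant, Proposition~\cite[3.21]{HMO} forces $\Ru(G)$ to be a $\k$-vector space. This gives a short list of possibilities for $\Ru(G)$ as a subspace of $Z\cup Y$ (in the notation of~\eqref{eq:Y}), governed by whether the projections to the $(1,3)$ and $(2,3)$ entries are trivial.

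Next I would split into cases according to the image of $\Ru(G)$. If $\Ru(G)$ surjects onto the $(2,3)$-entry, then (after a basis adjustment absorbing the $(1,3)$-part, exactly as in the passage from~\eqref{eq:phixy} to $\widetilde\varphi$) $\Ru(G)$ contains $Y$, and since $\dim Y=2$ this would show $G$ is determined by $\omega(\cV_2\oplus\cV/\cV_2)$ — case~(1). The delicate situation is when $\Ru(G)$ does not reach the $(2,3)$-slot, i.e.\ $\Ru(G)\subset Z$; since $\tau(\Ru(G))=1$ and every proper $\delta$-subgroup of $\Ga$ has type $0$ by~\cite[Example~2.7(1)]{CassSingerJordan}, this forces $\Ru(G)=Z$. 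In this sub-case the hypothesis that $\cV_2$ is of type (CQ) is what prevents collapsing to case~(1): the restriction of $G$ to $\omega(\cV_2\otimes(\cV/\cV_2)^*)$, whose unipotent radical would have to sit in the $(2,3)$-block, must be reductive. I would then show $\cV_2$ cannot be semisimple: if it were, $\cV_1$ would split off, and combined with $\Ru(G)=Z$ and the indecomposability of $\cV$ one reaches a contradiction with the (CQ) condition, along the lines of the reduction "(CR,CQ,QQ)$\Rightarrow\tau(G)=0$" done earlier. Finally, to see that $\cV$ lies in the tensor category generated by the first prolongation $D(\cV/\cV_1)$: the extension class of $0\to\cV_1\to\cV_2\to\cV_2/\cV_1\to 0$ is controlled by the $(1,2)$-entry, which in the (CQ) case is, up to the constant character, a $\delta$-primitive of the type-(NC) extension datum of $\cV/\cV_1$ — precisely the information recorded by $D(\cV/\cV_1)$; this is where Lemma~\ref{lem:firstorderphipsi} enters, pinning down the $(1,2)$ and $(1,3)$ entries as first-order differential polynomials $a_0x+a_1x'$ in the $(2,3)$-entry, which exhibits $\cV$ as a subquotient of a construction on $D(\cV/\cV_1)$.

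The main obstacle I expect is the bookkeeping in the sub-case $\Ru(G)\subset Z$: one has to simultaneously track the constant character $a_{11}/a_{22}$, the non-constant character $a_{22}/a_{33}$, the normalizer computation for $\Ru(G)$ (as in the $M(\lambda,a,b,c,d)$ calculation in the proof of Proposition~\ref{prop:ncnc}), and the cocycle identity~\eqref{eq:phixy}-type relation for the $(1,3)$-entry as a function of the $(1,2)$- and $(2,3)$-entries — then feed all of this into Lemma~\ref{lem:firstorderphipsi} to conclude that $\cV$ is built from $D(\cV/\cV_1)$. Extracting the clean dichotomy (case~(1) versus case~(2)) from this analysis, and verifying that in case~(2) the restriction to $\omega(\cV_2\otimes(\cV/\cV_2)^*)$ is genuinely reductive rather than merely "not of full unipotent radical", is the technical heart of the argument; the rest is analogous to the (NC,NC) and (CR,NC,NC) cases already treated.
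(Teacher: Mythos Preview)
Your case split on $\Ru(G)$ is where the argument goes wrong, and the error begins before the split itself. The appeal to \cite[Proposition~3.21]{HMO} to force $\Ru(G)$ to be a $\k$-subspace of $Y\cong\omega(\cV_2\otimes(\cV/\cV_2)^*)$ requires this $G$-module to be \emph{semisimple} and purely non-constant; under (CQ) the module $\cV_2$, and hence $\cV_2\otimes(\cV/\cV_2)^*$, need not be semisimple, and in fact in the situation producing conclusion~(2) one finds $\Ru(G)=\{y(\psi(v),v):v\in\k\}$ with $\psi(v)=a_0v+a_1v'$ and $a_1\ne 0$, which is \emph{not} a $\k$-subspace of $Y$. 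Even granting your setup, the branch ``$\Ru(G)\subset Z$'' is vacuous: since $\tau(G/\Ru(G))=0$ one has $G_0\subset\Ru(G)$, and the (NC) hypothesis on $\cV/\cV_1$ together with Lemma~\ref{lem:strongidepi} makes $G_0$ surject onto $\Ga$ in the $(2,3)$-slot, so $\Ru(G)$ always hits that slot. Conversely, in your ``easy'' branch the assertion that a basis change yields $\Ru(G)\supset Y$ is false: a graph $\{y(\psi(v),v)\}$ surjects onto $(2,3)$, yet no change of basis of $V$ enlarges it to all of $Y$. Thus the case that actually leads to conclusion~(2) is hidden inside the branch you send to~(1), while the branch you call delicate is empty.

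The paper's proof runs on a different dichotomy. From the non-constant conjugation character $a_{11}/a_{33}$ on $\Lie Z$ one gets either $Z\subset G$ or $Z\cap G=\{1\}$; whenever $Z\subset G$, the equality $Y=Z\cdot G_0$ gives $Y\subset G$ and hence conclusion~(1). The semisimple-$\cV_2$ case is handled by the extended (CR,NC,NC) analysis. When $\cV_2$ is not semisimple, the reductivity of $G':=G|_{\omega(\cV_2\otimes(\cV/\cV_2)^*)}$ is a further case split rather than a consequence: if $\Ru(G')\ne\{1\}$, an explicit commutator $g_1g_2g_1^{-1}g_2^{-1}$ (with $g_1$ witnessing $\Ru(G')\ne\{1\}$ and $g_2\in G_0$) lands non-trivially in $Z$, forcing $Z\subset G$ and conclusion~(1). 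Only in the residual case --- $G'$ reductive, hence $\overline{G'}$ commutative, hence $\cV_1\cong\cV_2/\cV_1$ and $G$ of the form~\eqref{eq:gr}, together with $Z\cap G=\{1\}$ --- does one write $\Ru(G)=\{y(\psi(v),v)\}$ and feed the normalizer relation into Lemma~\ref{lem:firstorderphipsi} to reach conclusion~(2). Your instinct about that endgame is right, but the commutator step and the $Z$-dichotomy are the structural ingredients your outline is missing.
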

\begin{proof}
Since the restriction of $G$ to $\omega(\cV_2\oplus\cV/\cV_2)$ is of type $0$,  the NC condition implies that $G_0$ is contained in $Y$ defined by~\eqref{eq:Y}. Moreover, the restriction of $G_0$ to $\omega(\cV/\cV_1)$ is isomorphic to $\Ga$. 

The character of the action of $G$ on $\Lie{Z}$, where $Z$ is defined by~\eqref{eq:Z}, by conjugation equals $\frac{a_{11}}{a_{33}}$, whose image is not contained in $C^\times$ by the (CQ,NC) assumption.
Since $\Lie G\cap\Lie Z$ is $G$-invariant,  \cite[Proposition~3.21]{HMO} implies that $\Lie G\cap\Lie{Z}$ is a $\k$-subspace of $\Lie Z$, and so  either $Z\subset G$ or $Z\cap G=\{1\}$.

If $\cV_2$ is semisimple, then $G$ is determined by $\omega(\cV_2\oplus\cV/\cV_2)$, as was noticed while dealing with the case (CR,NC,NC). 
From now on, we will assume that $\cV_2$ is not semisimple. 
Let $G'$ denote the restriction of $G$ to $\omega\left(\cV_2\otimes\left(\cV/\cV_2\right)^*\right)$. Suppose that $\Ru(G')\neq\{1\}$. Hence, there exist $x,y,z,t,u,v \in \k$ such that $xt\ne 0$, $v \ne 0$,
$$
g_1 := \begin{pmatrix}
t & xt & yt\\
0 & t & zt\\
0 & 0 & t
\end{pmatrix} \in G,\qquad\text{and}\qquad
g_2 := \begin{pmatrix}
1 & 0 & u\\
0 & 1 & v\\
0 & 0 & 1
\end{pmatrix}\in G.
$$ 
Since $xv \ne 0$ and
$$
1\ne
g_1g_2g_1^{-1}g_2^{-1}=\begin{pmatrix}
1 & 0 & xv\\
0 & 1 & 0\\
0 & 0 & 1
\end{pmatrix}\in Z,
$$
 $Z\subset G$. Hence, $G$ contains $Y=ZG_0$ and, thus, is determined by $\omega(\cV_2\oplus\cV/\cV_2)$.

It remains to consider the case of reductive $G'$. Since $[G',G']$ is unipotent and, therefore, connected, $[G',G']\subset\Ru(G')=\{1\}$. Hence, $\overline{G'}$ is commutative. Since $\cV_2$ is not semisimple, we conclude that $\cV_1\cong\cV_2/\cV_1$. Thus, there exist a Kolchin closed subgroup $A \subset \k^\times$,  $A \ne C^\times$, and  a differential polynomial $\varphi$ of positive order (it cannot be replaced by a usual polynomial)  such that, for every $g \in G$, there exist $t,u,v \in \k$ and $a \in A$ such that $t \ne 0$,
\begin{equation}\label{eq:gr}
g =\begin{pmatrix}
at & \varphi(a)t & ut\\
0 & at & vt\\
0 & 0 & t
\end{pmatrix},
\end{equation}
and, for all $a \in A$, there exist $t,u,v \in \k$, such that
$$
\begin{pmatrix}
at & \varphi(a)t & ut\\
0 & at & vt\\
0 & 0 & t
\end{pmatrix} \in G
$$ 
 If $Z \subset G$, then $G$ contains $Y=ZG_0$ and, therefore, is determined by $\omega(\cV_2\oplus\cV/\cV_2)$. If $Z\not\subset G$, then there exists a differential polynomial $\psi$ such that
$$
\Ru(G)=\left\{\begin{pmatrix}
1 & 0 & \psi(x)\\
0 & 1 & x\\
0 & 0 & 1
\end{pmatrix}\Big|\: x \in \k\right\}.
$$ Since the conjugation by~\eqref{eq:gr} preserves $\Ru(G)$, we obtain the restriction on $\psi$:
$$
\psi(ax)=a\psi(x)+x\varphi(a)\qquad\forall\ a\in A,\ x\in \k.
$$
Hence, by Lemma~\ref{lem:firstorderphipsi}, there exist $a_0 \in \k$ and $a_1 \in \k^\times$ such that $\psi(x)=a_0x+a_1x'$ and $\varphi(a)=a_1a'$ for all $x \in \k$ and $a \in A$. 
By the change of basis with the matrix
$$
\begin{pmatrix}
1 & \frac{a_0}{a_1} &0\\
0 & \frac{1}{a_1} &0\\
0 & 0 & \frac{1}{a_1}
\end{pmatrix}
$$
in representation~\eqref{eq:gr}, $\varphi(a)$ and $u$ in~\eqref{eq:gr} can be replaced by $a'$ and $v'$, respectively. Hence, $\cV$ is in the tensor category generated by $(\cV/\cV_1)^{(1)}$.
\end{proof}
\section{Acknowledgements}
The authors are grateful to Charlotte Hardouin for productive discussions and useful suggestions.

\bibliographystyle{abbrvnat}
\bibliography{bibdata}
\end{document}